\documentclass[10pt,leqno]{amsart}
\usepackage{graphicx}
\usepackage{indentfirst,csquotes}
\usepackage[indentafter]{titlesec}
\titleformat{name=\section}{}{\thetitle.}{0.8em}{\centering\scshape}
\titleformat{name=\subsection}[runin]{}{\thetitle.}{0.5em}{\bfseries}[.]
\titleformat{name=\subsubsection}[runin]{}{\thetitle.}{0.5em}{\itshape}[.]
\titleformat{name=\paragraph,numberless}[runin]{}{}{0em}{}[.]
\titlespacing{\paragraph}{0em}{0em}{0.5em}
\titleformat{name=\subparagraph,numberless}[runin]{}{}{0em}{}[.]
\titlespacing{\subparagraph}{0em}{0em}{0.5em}
\usepackage{lipsum}

\usepackage{amssymb,amsthm,amsmath}
\usepackage{xcolor,paralist,hyperref,titlesec,fancyhdr,etoolbox,diagbox,multirow}
\usepackage[all]{xy}
\theoremstyle{definition}
\newtheorem{definition}{Definition}[]
\newtheorem{remark}[definition]{Remark}
\newtheorem{example}[definition]{Example}
\theoremstyle{plain}
\newtheorem{theorem}[definition]{Theorem}
\newtheorem{corollary}[definition]{Corollary}

\newtheorem{question}[definition]{Question}
\newtheorem{lemma}[definition]{Lemma}
\newtheorem{proposition}[definition]{Proposition}
\renewenvironment{proof}{\noindent\textsc{Proof.}\quad}{\qed}

\hypersetup{ colorlinks=true, linkcolor=black, filecolor=black, urlcolor=black }

\newcommand\A{\mathop{}\!\mathbb{A}}
\newcommand\Z{\mathop{}\!\mathbb{Z}}
\newcommand\Q{\mathop{}\!\mathbb{Q}}

\newcommand\p{\mathop{}\!\mathfrak{p}}
\newcommand\pp{\mathop{}\!\mathfrak{P}}

\newcommand\OK{\mathop{}\!\mathcal{O}}
\newcommand\Mat{\mathop{}\!\mathrm{Mat}}
\newcommand\Gal{\mathop{}\!\mathrm{Gal}}

\newcommand\Pic{\mathop{}\!\mathrm{Pic}}
\newcommand\Spec{\mathop{}\!\mathrm{Spec}}
\newcommand\Res{\mathop{}\!\mathrm{Res}}
\newcommand\GL{\mathop{}\!\mathrm{GL}}
\newcommand\GG{\mathop{}\!\mathbb{G}}

\newcommand\mo{\mathop{}\!\mathrm{mod}}

\newcommand\X{\mathop{}\!\mathbf{X}}

\newcommand\stab{\mathop{}\!\text{Stab}}
\begin{document}
\title{On bounds of extension degrees for similarity of integral matrices over number fields} 
\author{Mingqiang FENG and Ziyang ZHU}
\date{\today}
\address{School of Mathematical Sciences, Capital Normal University, Beijing 100048, China}
\email{wildmorph@outlook.com, zhuziyang@cnu.edu.cn}
\maketitle

\let\thefootnote\relax
\footnotetext{MSC2020: 11R37, 11R65, 15A27, 20G35.} 

\begin{abstract}
It is well-known that if $n\times n$ integral matrices $A$ and $B$ of a number field $K$ are similar over all completions of the ring of integers of $K$, then $A$ and $B$ are similar over the ring of integers of a finite extension of $K$. We prove that there is no uniform bound of the degree of extension of $K$ valid for all $n\times n$ matrices. On the other hand,  we provide a upper bound of the degree of extension of $K$ for a given separable characteristic polynomial.
\end{abstract}

\bigskip
\section{Introduction}\label{s1}

It is a classical problem to study the similarity of integral matrices of a number field $K$. In particular, Guralnick \cite[Theorem 7]{gu80} proved the following theorem

\begin{theorem}[Guralnick]
Let $M,N\in\Mat_n(\OK_K)$ where $K$ is a number field $K$ and $\OK_K$ is the ring of integers of $K$. If $M$ and $N$ are similar over each local completion $\OK_{K_{\p}}$ for all finite places, then there is a finite extension $L/K$ such that $M$ and $N$ are similar over $\OK_L$.
\end{theorem}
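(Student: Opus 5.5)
The plan is to recast local similarity as a class in a finite abelian group attached to an arithmetic group scheme, and then to kill that class by extending scalars. Let $G=\mathrm{GL}_{n,\OK_K}$ act on $\Mat_n$ by conjugation, and let $H=\stab_G(M)$ be the centralizer group scheme of $M$ over $\OK_K$. Its generic fiber $H_K$ is the unit group of the centralizer algebra $C_K(M)$, and $H(\OK_K)=C(M)^\times$, where $C(M)=\{X\in\Mat_n(\OK_K): XM=MX\}$ is an $\OK_K$-order.

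\textbf{Step 1: reduce to global similarity over $K$.} Local similarity at one finite place already gives similarity over $K_\p$. Over a field, similarity is decided by rational canonical form, which does not change under field extension. Hence $N=gMg^{-1}$ for some $g\in\GL_n(K)$.

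\textbf{Step 2: build an adelic class.} For each $\p$ pick $h_\p\in\GL_n(\OK_{K_\p})$ with $N=h_\p M h_\p^{-1}$; for almost all $\p$ we may take $h_\p=g$. Then $c_\p:=g^{-1}h_\p$ lies in $H(K_\p)$, and $c_\p\in H(\OK_{K_\p})$ for almost all $\p$. So $c=(c_\p)$ is an element of the finite adeles $H(\A_K^f)$.

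The question becomes whether $c$ lies in
\[
H(K)\cdot \Bigl(\prod_\p H(\OK_{K_\p})\cap \textstyle\prod_\p g^{-1}\GL_n(\OK_{K_\p})\,\cdot\Bigr)\ldots
\]
More cleanly, we want $x\in H(K)$ with $gx\in\GL_n(\OK_{K_\p})$ for every $\p$. Such an $x$ gives $gx\in\GL_n(\OK_K)$ conjugating $M$ to $N$. Since $c_\p\in g^{-1}\GL_n(\OK_{K_\p})$, it suffices that $c\in H(K)\prod_\p H(\OK_{K_\p})$. Thus the obstruction is the image of $c$ in
\[
\mathrm{Cl}(H)=H(K)\backslash H(\A_K^f)/\textstyle\prod_\p H(\OK_{K_\p}).
\]
Because $H_K=C_K(M)^\times$ is connected, this double coset set is finite. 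It is a class set of the order $C(M)$: the set of locally free rank-one modules over $C(M)$ up to isomorphism.

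\textbf{Step 3: kill the class by extending $K$.} This is the main obstacle, because $C(M)$ is in general noncommutative and non-maximal, so its class set is not obviously a group. I would try to reduce to a commutative quotient.

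First, locally free rank-one $C(M)$-modules can be pushed to the reduced norm. When $H_K$ is a product of $\GL_{m_i}$ over fields $E_i$, strong approximation for the $\mathrm{SL}$ parts (after taking care of the archimedean noncompactness) should let the class of $c$ be detected by $\det(c)\in\prod_i E_i^\times\backslash \A_{E_i}^{f,\times}/\det(\text{local units of the order})$. This quotient is a ray class group of an order in $\prod E_i$, and it is a finite abelian group.

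Next, I would use the capitulation (principal ideal) theorem, applied to ring class fields of the relevant orders. Let $L_i$ be the ring class field of $E_i$ for the conductor of the image order. Choose a finite extension $L/K$ such that $L E_i$ contains $L_i$ for each $i$. Extending scalars to $L$ sends $c$ to its image under the norm-compatible transfer map, and that map is trivial on the class group by the principal ideal theorem. Since the construction is compatible with base change, the images of $M$ and $N$ in $\Mat_n(\OK_L)$ are then conjugate by $\GL_n(\OK_L)$.

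The delicate points are the following.
\begin{itemize}
\item The transfer must really be the map induced by base change of the order $C(M)\otimes\OK_L$. This order need not be the centralizer over $\OK_L$, but it is contained in that centralizer, and that containment suffices.
\item Capitulation for orders, not just for maximal orders, needs an argument. I expect to enlarge $L$ further so that the local conductor indices become trivial. An alternative is to use that the class of $c$ has finite order $m$ and to pass to extensions where the relevant idèles become $m$-th powers times global elements.
\end{itemize}
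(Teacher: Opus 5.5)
The paper does not prove this statement. It quotes it from \cite[Theorem 7]{gu80}, remarks that this proof is non-effective, and only proves an effective variant in the non-derogatory split separable case (Theorem \ref{t13}). So your proposal has to stand on its own. Steps 1 and 2 are fine: they give the standard adelic form of the obstruction. For non-derogatory $M$ your class set is exactly $\Pic(R_{M;K})$, and your $c$ corresponds to $[T_{M,N;K}]$ as in Lemmas \ref{t1}--\ref{t2}. The reduction to $\det$ in Step 3 tacitly assumes $C_K(M)$ is semisimple. For non-semisimple $M$ you must first pass to the reductive quotient of $H_K$, using strong approximation for its unipotent radical; this is routine but missing.

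The genuine gap is Step 3, and it is not a technicality. The finite group $\prod_i E_i^\times\backslash\prod_i\A_{E_i}^{f,\times}/\det(U)$ is not a product of (ring) class groups of the $E_i$. The subgroup $\det(U)$ is a congruence subgroup that is generally not of product form. The group therefore contains a ``gluing'' part: units of the normalization modulo the conductor $\mathfrak f$, taken modulo units of the order and global units. Ring class fields of the image orders and the classical principal ideal theorem do not touch this part.

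The paper's own example shows your recipe fails. For $M_0',N_0'$ one has $E_1=E_2=\Q$, and every order in $\Q$ is $\Z$. So all your $L_i$ equal $\Q$ and your recipe yields $L=\Q$. Yet $\Pic(R_{M_0';\Q})\cong\mathbb{F}_q^\times/\{\pm1\}$, the class is $[b]$ of order $\frac{q-1}{2}$, and the projection-formula argument of Theorem \ref{t4} forces $\frac{q-1}{2}\mid[L:\Q]$.

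Your first fallback, enlarging $L$ until the conductor indices become trivial, cannot work. By Lemma \ref{t3} the relevant order over $\OK_L$ is $C(M)\otimes\OK_L$, whose conductor is $\mathfrak f\OK_L$ and never the unit ideal (in the example $q$ stays a non-unit), so the local indices only grow. Your second fallback does not close the gap either: knowing $[c]^m=1$ and writing idèles over $L$ as $m$-th powers times global elements does not by itself make $[c]$ trivial.

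What is actually needed is capitulation for \emph{ray} classes. Every class of $\A_{E}^{f,\times}$ modulo $E^\times(1+\mathfrak f\widehat{\OK}_E)$ must die, modulo the analogous congruence subgroup, over some finite $L/K$. This is the object the paper works with in Theorem \ref{t13}, via the subgroup $\Xi_0$ built from $1+\pp^{v_{\pp}(\delta_\chi)}$.

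One way to get it has two steps. First, make the ideal part principal in a Hilbert class field. Second, correct the residue modulo $\mathfrak f$ by an algebraic unit $\varepsilon\equiv\gamma\pmod f$, where $0\neq f\in\mathfrak f\cap\Z$ and $\gamma$ is coprime to $f$. Such an $\varepsilon$ exists: pick $d\geq 2$ with $\gamma^d\equiv1\pmod f$, and take $\varepsilon$ a root of a monic $h(x)=(x-\gamma)^d+c_{d-1}(x-\gamma)^{d-1}+c_0$ with $f\mid c_{d-1}$, $f^d\mid c_0$ and $h(0)=(-1)^d$. These conditions are solvable, and then $(\varepsilon-\gamma)/f$ is integral.
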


It should be pointed out that the proof given in \cite{gu80} is neither constructive nor effective. It is natural to ask whether there is a uniform bound of the degree of extension $L/K$ for all such matrices in $\Mat_n(\OK_K)$. More precisely, if $M$ and $N$ are similar over each local completion $\OK_{K_{\p}}$ for all finite places $\p$, we define
\[C_K(M,N):=\min\{[L:K]:L/K\text{ is a finite extension such that }M,N\text{ are similar over }\OK_L\}.\]

\begin{question}
Whether $C_K(M,N)$ can be bounded when $M$ and $N$ run over $\Mat_n(\OK_K)$ such that $M$ and $N$ are similar over each local completion $\OK_{K_{\p}}$ for all finite places $\p$?
 \end{question}

In this paper, we first give a negative answer to this question (see Theorem \ref{t5}).

\begin{theorem} \label{main1}
There is a sequence of matrices $M_n$ and $N_n$ in $\Mat_n(\Z)$ where $M_n$ and $N_n$ are similar over $\Z_p$ for all primes $p$ such that
\[\lim_{n\to\infty}C_{\Q}(M_n, N_n)=\infty.\]
\end{theorem}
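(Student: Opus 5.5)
We use the Latimer--MacDuffee correspondence, which turns the similarity question into a question about ideal classes. Let $f\in\Z[x]$ be monic, irreducible and separable of degree $n$, let $\alpha$ be a root, $F=\Q(\alpha)$ and $R=\Z[\alpha]$. Then $\Z$-similarity classes of matrices with characteristic polynomial $f$ correspond to isomorphism classes of fractional $R$-ideals. Two matrices are locally similar at every $p$ exactly when the corresponding ideals are locally isomorphic, i.e.\ lie in the same genus.

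\emph{Choice of $f$.} We take $f$ with $R=\OK_F$ maximal. Then every ideal is invertible and locally principal, so all matrices with characteristic polynomial $f$ are similar over every $\Z_p$, and their similarity classes are in bijection with $\Pic(\OK_F)$.

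\emph{Base change.} Over $\OK_L$, the matrices $M,N$ corresponding to ideals $I,J$ are similar if and only if $I\otimes\OK_L\cong J\otimes\OK_L$ as $\OK_L[x]/(f)$-modules. If $L\cap F=\Q$ (for example when $[L:\Q]$ is coprime to $n$), then $f$ stays irreducible over $L$. In that case $\OK_L[x]/(f)$ is an order in the field $LF$, and the condition becomes that $[I J^{-1}\OK_{LF}]$ is trivial in the Picard group of that order, hence in $\Pic(\OK_{LF})$. In general, $f$ splits over $L$ into factors corresponding to the composita $L\otimes F=\prod_i L_i$, and the condition requires $IJ^{-1}$ to become principal in each factor.

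\emph{Lower bound on the degree.} Take the norm $N_{L_i/F}$ of a generator. This shows that $[IJ^{-1}]^{[L_i:F]}$ is trivial in $\Pic(\OK_F)$, where the norm map on ideals sends $\mathfrak a\OK_{L_i}$ to $\mathfrak a^{[L_i:F]}$. Since $[L_i:F]\le[L:\Q]$, if $[IJ^{-1}]$ has order $m$ in $\Pic(\OK_F)$ then $m$ divides some $[L_i:F]\le [L:\Q]$, so $C_\Q(M,N)\ge m$. Strictly, the ring $\OK_L[x]/(f)$ may be non-maximal, but a principal ideal of it extends to a principal ideal of the maximal order, so the norm argument still applies after extending ideals to $\OK_{L_i}$.

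\emph{Construction.} It therefore suffices to find monic integral polynomials $f_n$ of degree $n$ with $\Z[\alpha]$ the maximal order and $\Pic(\OK_{F_n})$ containing an element of order $m_n\to\infty$. We then take $M_n$ to be the companion matrix of $f_n$ (corresponding to $R$) and $N_n$ the matrix of multiplication by $\alpha$ on an ideal of order $m_n$. An equivalent route is to embed an $n_0\times n_0$ example block-diagonally with an identity complement, so it is enough to produce, in bounded degree (say quadratic), rings with elements of unbounded order in the class group.

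\emph{Quadratic case.} For $\Q(\sqrt{-d})$ with $d\equiv 1,2 \pmod 4$ squarefree, we have $\Z[\sqrt{-d}]=\OK$ and $f=x^2+d$. Class groups with elements of large order are available, e.g.\ $d=k^2+1$ and the ideal $\mathfrak p$ above a prime $p=2$ or a small split prime. Alternatively, the norm-form argument shows directly that $\mathfrak p^j$ is non-principal for small $j$ when $d$ is large, since $p^j<d$ forces principal generators to be rational. Concretely, take the prime $2$ in $\Q(\sqrt{-d})$ with $d\equiv 7\pmod 8$; this requires the order $\Z[(1+\sqrt{-d})/2]$ with polynomial $x^2-x+(d+1)/4$. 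Then $\mathfrak p^j$ is principal only if $4\cdot 2^j=a^2+db^2$, which is impossible when $2^{j+2}<d$ unless $b=0$, and $b=0$ is excluded. Hence the order of $[\mathfrak p]$ is at least $\log_2 d-2$, which is unbounded.

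\emph{Padding and main obstacle.} To get a sequence indexed by $n$, set $M_n=\mathrm{diag}(A_d,I_{n-2})$ and $N_n=\mathrm{diag}(B_d,I_{n-2})$ with $d=d(n)\to\infty$. Local similarity is clear. The main obstacle is that a decomposable characteristic polynomial $(x^2-x+c)(x-1)^{n-2}$ breaks the clean Latimer--MacDuffee bijection: a global similarity over $\OK_L$ need not respect the block decomposition. We handle this by noting that $x^2-x+c$ and $x-1$ are coprime modulo every prime once $c$ is chosen suitably, i.e.\ their resultant $c$ is taken to be $\pm1$-free of issues. More simply, we work with the idempotent decomposition over $\OK_L$: if the resultant is a unit everywhere this is automatic. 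Since the resultant $c=(d+1)/4$ is not a unit, the cleaner fix is to avoid padding altogether and use $n=2$ only, defining $M_n,N_n$ as $2\times 2$ examples, or to pad with blocks $x-t$ with $t$ chosen so that the resultant is $\pm1$. This compatibility of the decomposition is the step we expect to require the most care.
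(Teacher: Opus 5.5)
Your $2\times 2$ core is correct, and it takes a different route from the paper.

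\begin{itemize}
\item The paper uses the non-maximal order $\Z[t]/((t-1)(t-1-q))$. It computes $\Pic\cong\mathbb{F}_q^{\times}/\{\pm1\}$ via a Milnor square and gets a class of order $(q-1)/2$, which it later shows is sharp.
\item You use the maximal order of $\Q(\sqrt{-d})$ with $d\equiv 7\pmod 8$ squarefree, together with the elementary norm-form estimate $\mathrm{ord}[\mathfrak p]\ge\log_2 d-2$. This avoids any Picard-group computation. The price is a merely logarithmic bound, which is enough here.
\end{itemize}

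Your norm argument giving $C_{\Q}(A_d,B_d)\ge\mathrm{ord}[\mathfrak p]$ is the same mechanism as the projection formula in Theorem~\ref{t4}. One small slip: $L\cap F=\Q$ does not by itself keep $f$ irreducible over $L$ when $F$ is not Galois. Your general-case argument with $L\otimes F=\prod_i L_i$ makes this irrelevant.

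The genuine gap is the step you flag yourself: passing from size $2$ to size $n$. The statement requires this, since $M_n,N_n\in\Mat_n(\Z)$ with $n\to\infty$. None of your proposed resolutions works.

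\begin{itemize}
\item Taking $n=2$ only does not produce $n\times n$ matrices.
\item A linear pad $x-t$ with unit resultant cannot exist. Indeed $\mathrm{Res}(x^2-x+c,\,x-t)=t^2-t+c\ge c=(d+1)/4\ge 2$ for every integer $t$, because $t^2-t\ge 0$ on $\Z$.
\item Monogenic degree-$n$ fields with class-group elements of unbounded order are asserted, not constructed.
\end{itemize}

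The missing observation is that no coprimality over $\Z$ is needed. Let $M_n=\mathrm{diag}(A_d,I_{n-2})$ and $N_n=\mathrm{diag}(B_d,I_{n-2})$. Suppose $U=\begin{pmatrix}U_{11}&U_{12}\\U_{21}&U_{22}\end{pmatrix}\in\GL_n(\OK_L)$ satisfies $UM_n=N_nU$.

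\begin{itemize}
\item Comparing top-right blocks gives $(B_d-I)U_{12}=0$. Since $\det(B_d-I)=c\neq 0$, this forces $U_{12}=0$.
\item Then $\det U=\det U_{11}\cdot\det U_{22}\in\OK_L^{\times}$, so $U_{11}\in\GL_2(\OK_L)$.
\item Comparing top-left blocks gives $U_{11}A_d=B_dU_{11}$.
\end{itemize}

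Equivalently, $U$ maps $\ker(M_n-I)$ onto $\ker(N_n-I)$, both of which equal $0\oplus\OK_L^{n-2}$, and it induces a similarity on the quotient. Hence $C_{\Q}(M_n,N_n)\ge C_{\Q}(A_d,B_d)\ge\log_2 d-2$, and choosing $d=d(n)\to\infty$ finishes the proof.

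This is exactly the reduction behind the paper's padding with $2I_{n-2}$ in Theorem~\ref{t5}. All it uses is that the pad's eigenvalue is not a root of the $2\times 2$ characteristic polynomial.
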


On the other hand, we prove that there is an effective upper bound when the characteristic polynomial of $M$ and $N$ is a fixed separable polynomial over $K$ (see Theorem \ref{t13}).

\begin{theorem}
Let $\chi(t)$ be a monic separable polynomial over $\OK_K$, then there is an effective constant $C=C(f,n,K)$ depending only on $f,n$ and $K$ such that
\[\max\{C_K(M,N):\text{$M$ and $N$ are similar over $\OK_{K_{\p}}$ for all places $\p$ with $\mathrm{ch}(M)=\chi$}\}\leq C,\]
where $\mathrm{ch}(M)$ is the characteristic polynomial of $M$.
\end{theorem}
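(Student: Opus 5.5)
Fix $\chi$ separable of degree $n$ and set $A=K[t]/(\chi)=\prod_i K_i$, a product of number fields, with order $R=\OK_K[t]/(\chi)\subset \OK_A=\prod_i\OK_{K_i}$. I would use the Latimer--MacDuffee--Taussky correspondence, extended to Dedekind bases. For any Dedekind domain $D\supseteq\OK_K$, the $D$-similarity classes of matrices $M\in\Mat_n(D)$ with $\mathrm{ch}(M)=\chi$ correspond to isomorphism classes of $R_D=R\otimes D$-modules that are free of rank $n$ over $D$ and spanning $A\otimes_K\mathrm{Frac}(D)$. Separability makes each such module a rank-one $R_D$-lattice.

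Suppose $M,N$ are similar over every $\OK_{K_\p}$. Then the associated lattices $I_M,I_N$ are locally isomorphic, i.e.\ they lie in the same genus. The obstruction is then a class in a finite abelian group. Indeed, fix a multiplier ring $S$ with $R\subseteq S\subseteq\OK_A$ (finitely many choices, effectively listed). Locally isomorphic lattices have the same multiplier ring, and the classes in the genus of $I_M$ form a torsor under $\Pic(S)$. So $I_N\cong \mathfrak a I_M$ for some invertible $S$-ideal $\mathfrak a$, and it suffices to find $L$ with $\mathfrak a\otimes\OK_L$ principal over $S\otimes\OK_L$.

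The key steps, in order, are as follows.

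(1) Base change: for $L/K$, the lattice $I\otimes_{\OK_K}\OK_L$ corresponds to $M$ viewed over $\OK_L$. This gives $M\sim N$ over $\OK_L$ iff $\mathfrak a\otimes\OK_L$ is principal in $S_L=S\otimes\OK_L$, which still has multiplier ring $S_L$ since everything is flat.

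(2) Killing the ideal: use the exact sequence
\[1\to (\OK_A/\mathfrak f)^\times/(S/\mathfrak f)^\times\cdot\OK_A^\times\text{-image}\to\Pic(S)\to\Pic(\OK_A)\to1,\]
with $\mathfrak f$ the conductor. The $\Pic(\OK_A)=\prod\mathrm{Cl}(K_i)$ part dies after adjoining a suitable field. For instance, the principal ideal theorem says each ideal of $K_i$ becomes principal in the Hilbert class field $H_i$, but I need an extension of $K$, not of $K_i$. So I take $L$ to be the compositum of the Galois closures of the $H_i$ over $K$; then $A\otimes_K L$ splits into copies of fields containing $H_i$. Alternatively, and more cheaply, I use that $\mathfrak a^{h}$ is principal with $h=|\Pic(S)|$: adjoining $h$-th roots of a generator componentwise (a Kummer-type extension of degree at most $h^{n}$ times the degree of the normal closure) makes $\mathfrak a$ itself principal up to units. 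The local part $(\OK_A/\mathfrak f)^\times$ likewise has exponent bounded in terms of $\mathfrak f\mid\mathrm{disc}(\chi)$, so it is handled by the same root extraction.

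(3) Bound: $[L:K]\le n!\cdot h^{n}\cdot(\text{bounded ramified adjustment})$, where $h\le|\Pic(R)|$ is effectively bounded by Minkowski/Brauer--Siegel-type bounds in terms of $\mathrm{disc}(\chi)$, $n$ and $K$. This yields $C(\chi,n,K)$.

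The main obstacle is step (2): ensuring that after extracting roots the resulting order $S_L$ has $\mathfrak a S_L$ principal, not merely principal over $\OK_{A\otimes L}$. The conductor part may fail to trivialize under unramified-type extensions. I would first try to choose $L$ so that it is totally split at primes dividing $\mathfrak f$, which keeps $(S_L/\mathfrak f)^\times$ controlled, and then kill the remaining local unit quotient by adjoining roots of global units with explicit degree bounds. If that fails, I would fall back on a general effective version of Guralnick's argument applied within the finite genus, taking $L$ to be a compositum over the finitely many classes.
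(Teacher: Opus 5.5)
Your reduction is sound and is essentially the paper's Section~2 framework: $T_{M,N;K}$, Faddeev's criterion, and flat base change. It gives that $M\sim N$ over $\OK_L$ iff $[\mathfrak a]\in\Pic(S)$ dies in $\Pic(S\otimes_{\OK_K}\OK_L)$. Killing the image of $[\mathfrak a]$ in $\Pic(\OK_A)=\prod_i\mathrm{Cl}(K_i)$ with Hilbert class fields of the conjugates of the $K_i$ is also fine.

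\textbf{The gap.} The problem is the kernel $(\OK_A/\mathfrak f)^\times/(S/\mathfrak f)^\times\cdot\mathrm{im}(\OK_A^\times)$ of $\Pic(S)\to\Pic(\OK_A)$, which you dispose of ``by the same root extraction''. That cannot work. A class in this kernel is given by local units $u_\p\in\OK_{A,\p}^\times$ at the primes $\p\mid\mathfrak f$. It dies over $L$ only if some global unit $\varepsilon\in\OK_{A\otimes L}^\times$ satisfies $\varepsilon\equiv u_\p$ modulo $(S\otimes\OK_L)_{\mathfrak P}^\times$ at every $\mathfrak P\mid\mathfrak f$. A bound on the exponent of the group says nothing about whether such units exist. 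Extracting an $h$-th root $\beta$ of a generator of $\mathfrak a^h$ only yields an invertible ideal $\mathfrak c=\beta^{-1}\mathfrak aS_L$ with $\mathfrak c^h=S_L$. For a non-maximal order this does not make $\mathfrak c$ principal, because $\OK_{A_L,\mathfrak P}^\times/S_{L,\mathfrak P}^\times$ has torsion. Moreover, a Kummer extension ramified at $\mathfrak f$ enlarges the conductor of $S_L$ and hence enlarges this kernel.

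\textbf{A counterexample to step (2).} The paper's own example breaks it:
\begin{itemize}
\item[(i)] $\OK_A=\Z^{\oplus2}$, so $\Pic(\OK_A)=0$, and the whole obstruction $[b]\in\mathbb F_q^\times/\{\pm1\}$ lies in the kernel.
\item[(ii)] With $h=(q-1)/2=|\Pic(S)|$, the ideal $\mathfrak a^{h}$ is generated by the unit $(1,-1)$ of $\OK_A$.
\item[(iii)] If $q\equiv 3\pmod 4$, then $h$ is odd and $(1,-1)=(1,-1)^h$ already has an $h$-th root in $A$. So your Kummer step adjoins nothing, yet $\mathfrak c=(1,-1)^{-1}\mathfrak a$ satisfies $\mathfrak c^h=S$ and still has order $(q-1)/2$ in $\Pic(S)$.
\end{itemize}
Your alternative of taking $L$ split at the primes of $\mathfrak f$ leaves the local quotients unchanged. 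You would still need global units of $L$ with prescribed residues there, which is exactly the unaddressed problem. The fallback to ``an effective Guralnick argument'' is not an argument. As written, the proposal proves only that the $\Pic(\OK_A)$-component can be killed in a bounded extension.

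\textbf{What is missing.} You need a capitulation statement for ray class groups, and this is what the paper supplies. After passing to the splitting field, it triangularizes $M$ over $\OK_{H_{n-1}}$ (the Hilbert class field tower lemma). Lagrange interpolation then shows that every $g=(g_i)$ with $g_i\in 1+\pp^{v_\pp(\delta_\chi)}$ has $g(M)\in\GL_n(\OK_{H_{n-1,\pp}})$. Hence $\Xi_0^{\oplus n}\subseteq\stab_{\A}(\X_{M,N;H_{n-1}})$, with $\Xi_0$ depending only on $\delta_\chi$; this is what makes the bound uniform in $(M,N)$. By the Wei--Xu criterion the obstruction then lives in the ray class group $\A_{H_{n-1}}^\times/H_{n-1}^\times\Xi_0$. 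Passing to the corresponding class field $F$, the principal ideal theorem (transfer) for id\`{e}le class groups makes this group map trivially into $\A_F^\times/F^\times\overline{\Xi}_0$. That triviality is precisely the statement that the local unit classes become congruent to global units, which your step (2) lacks. The resulting bound is $[H_{n-1}:K]\cdot[\A_{H_{n-1}}^\times:H_{n-1}^\times\Xi_0]$, times $n!$ when $\chi$ does not split. In the example, $F=\Q(\zeta_q)^+$, which is ramified at $q$.
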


Notation and terminology are standard if not explained. For $n\times n$ matrices $M,N\in\Mat_n(R)$ over a commutative ring $R$, we say that $M$ and $N$ are similar over $R$ if there exists $U\in\GL_n(R)$ such that $UM=NU$. Let $K$ be a number field and $\OK_K$ be the ring of integers of $K$. For any finite place $\p$ of $K$, the completion of $K$ with respect to $\p$ is denoted by $K_{\p}$ and $\OK_{K_{\p}}$ is the integral ring of $K_{\p}$. We write $\OK_{K_{\p}}:=K_{\p}$ if $\p$ is infinite.

The paper is organized as follows. In \S \ref{s2}, we provide a lower bound for $C_K(M,N)$ and show this lower bound can be arbitrarily large by explicit constructing certain required matrices over $\mathbb Z$. In \S \ref{s4}, we give an upper bound for $C_K(M,N)$ if the characteristic polynomial is fixed and separable. The proof of this result relies on the work of Wei and Xu \cite{wx12, wx13} and some techniques from class field theory.

\subsection*{Acknowledgments}
The authors would like to thank Professor Fei XU for bringing this problem to their attention.

\section{Establishing the Lower Bound}\label{s2}

Let us assume that $M,N\in\Mat_n(\OK_K)$ are non-derogatory and for each finite place $\p$ of $K$ there is a $U_{\p}\in\GL_n(\OK_{K_{\p}})$ such that $U_{\p}M=NU_{\p}$. Define the integral centralizer of $M$ by
\[R_{M;K}:=\{A\in\Mat_n(\OK_K):AM=MA\},\]
note that $\OK_K[M]\subseteq R_{M;K}\subseteq K[M]$, so $R_{M;K}$ is commutative. Consider the $R_{M;K}$-module
\[T_{M,N;K}:=\{A\in\Mat_n(\OK_K):AM=NA\},\]
whose $R_{M;K}$-module structure is given by $r(A):=Ar$ for $r\in R_{M;K}$ and $A\in T_{M,N;K}$. We claim such a module is a line bundle on $\Spec(R_{M;K})$.

\begin{lemma}\label{t1}
$T_{M,N;K}$ is a locally free $R_{M;K}$-module of rank one.
\end{lemma}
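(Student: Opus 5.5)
Since $R_{M;K}$ is a finitely generated $\OK_K$-module (a submodule of $\Mat_n(\OK_K)$), it is a noetherian ring, and $T_{M,N;K}$ is a finitely generated $R_{M;K}$-module. So it suffices to show that $T_{M,N;K}$ is locally free of rank one at every prime, and for a finitely generated module over a noetherian ring this can be checked after completion. I would therefore work locally over each $\OK_{K_{\p}}$, using the given $U_{\p}$.

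First, formation of $R_{M;K}$ and $T_{M,N;K}$ commutes with flat base change $\OK_K\to\OK_{K_{\p}}$. Both are kernels of $\OK_K$-linear maps $\Mat_n(\OK_K)\to\Mat_n(\OK_K)$, namely $A\mapsto AM-MA$ and $A\mapsto AM-NA$, and kernels commute with flat base change. Thus
\[R_{M;K}\otimes\OK_{K_{\p}}=R_{M;K_\p},\qquad T_{M,N;K}\otimes\OK_{K_{\p}}=T_{M,N;K_\p},\]
where the right-hand sides are defined by the same formulas over $\OK_{K_\p}$. Over $\OK_{K_\p}$, the map $A\mapsto U_{\p}A$ is a bijection from $R_{M;K_\p}$ onto $T_{M,N;K_\p}$. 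Indeed, if $AM=MA$ then $U_\p AM=U_\p MA=NU_\p A$, and the inverse is $B\mapsto U_\p^{-1}B$. The map is $R$-linear for the module structure $r(A)=Ar$, since $U_\p(Ar)=(U_\p A)r$. Hence $T_{M,N;K}\otimes\OK_{K_\p}$ is free of rank one over $R_{M;K}\otimes\OK_{K_\p}$ for every finite $\p$.

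The remaining step is to pass from these local statements to local freeness on $\Spec R_{M;K}$; this is the only place needing care. Let $\mathfrak m$ be a maximal ideal of $R:=R_{M;K}$. Since $R$ is finite over $\OK_K$, $\mathfrak m$ lies over some finite prime $\p$. Then $R\otimes\OK_{K_\p}$ is the $\p$-adic completion of $R$, which is the product of the completions $\hat R_{\mathfrak m'}$ over the maximal ideals $\mathfrak m'$ above $\p$. Freeness of rank one of $T\otimes\OK_{K_\p}$ over this product gives $\hat T_{\mathfrak m}\cong\hat R_{\mathfrak m}$. Since $\hat R_{\mathfrak m}$ is faithfully flat over $R_{\mathfrak m}$, we get
\[\dim T_{\mathfrak m}/\mathfrak m T_{\mathfrak m}=\dim \hat T_{\mathfrak m}/\mathfrak m\hat T_{\mathfrak m}=1,\]
so by Nakayama $T_{\mathfrak m}$ is cyclic, say $T_{\mathfrak m}\cong R_{\mathfrak m}/I$. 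Completing gives $\hat R_{\mathfrak m}/I\hat R_{\mathfrak m}\cong\hat R_{\mathfrak m}$, so the annihilator $I\hat R_{\mathfrak m}$ is zero, and faithful flatness gives $I=0$. Thus $T_{\mathfrak m}\cong R_{\mathfrak m}$ for every maximal ideal, and a finitely generated module over a noetherian ring that is free of rank one at all maximal ideals is locally free of rank one.

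Note that the non-derogatory hypothesis is not needed for this argument beyond guaranteeing that $R$ is commutative, which the paper already notes.
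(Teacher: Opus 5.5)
Your proof is correct and follows the same route as the paper: in both, the key step is that left multiplication by $U_{\p}$ carries $R_{M;K}\otimes\OK_{K_{\p}}$ isomorphically onto $T_{M,N;K}\otimes\OK_{K_{\p}}$ as modules over $R_{M;K}\otimes\OK_{K_{\p}}$. You also make explicit two steps the paper leaves implicit: the flat-base-change identification of these completed modules with the corresponding centralizer and intertwiner modules over $\OK_{K_{\p}}$ (the kernel argument of Lemma~\ref{t3}), and the descent, via faithful flatness of $\hat R_{\mathfrak m}$ and Nakayama, from freeness over the $\p$-adic completion to $T_{\mathfrak m}\cong R_{\mathfrak m}$ at every maximal ideal $\mathfrak m$ of $R_{M;K}$.
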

\begin{proof}
Fix a finite place $\p$ of $K$. Since there is $U_{\p}\in\GL_n(\OK_{K_{\p}})$ such that $U_{\p}M=NU_{\p}$, we claim $(T_{M,N;K})_{\p}=U_{\p}(R_{M;K})_{\p}$ as $(R_{M;K})_{\p}$-modules. Indeed, for $A\in(T_{M,N;K})_{\p}$ we have $U_{\p}^{-1}AM=U_{\p}^{-1}NA=MU_{\p}^{-1}A$, so $U_{\p}^{-1}A\in(R_{M;K})_{\p}$, which means $A\in U_{\p}(R_{M;K})_{\p}$. Conversely, for $r\in(R_{M;K})_{\p}$, we have $(U_{\p}r)M=U_{\p}rM=U_{\p}Mr=NU_{\p}r$, hence $U_{\p}r\in(T_{M,N;K})_{\p}$. This proves the claim.
\end{proof}

Thus, $T_{M,N;K}$ defines a class $[T_{M,N;K}]$ in $\Pic(R_{M;K})$, the group of isomorphism classes of locally free rank one $R_{M;K}$-modules, with the group law is given by the tensor product. Note that $\Pic(R_{M;K})$ is a finite abelian group, because $R_{M;K}$ is an $\OK_K$-finite order.

\begin{lemma}[Faddeev, {\cite[Theorem 6]{gu80}}]\label{t2}
Assume that $M$ and $N$ are locally similar over $\OK_{K_{\p}}$ for all finite places $\p$. Then $M,N$ are similar over $\OK_K$ if and only if $[T_{M,N;K}]$ is the trivial class in $\Pic(R_{M;K})$.
\end{lemma}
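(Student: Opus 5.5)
The plan is to show that triviality of $[T_{M,N;K}]$ is equivalent to $T_{M,N;K}$ containing an element that is invertible as a matrix, i.e.\ some $U\in\GL_n(\OK_K)$ with $UM=NU$. The key tool is the local description from the proof of Lemma \ref{t1}: for every finite place $\p$ we have $(T_{M,N;K})_{\p}=U_{\p}(R_{M;K})_{\p}$ with $U_{\p}\in\GL_n(\OK_{K_{\p}})$.

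Suppose first that $M,N$ are similar over $\OK_K$, say $UM=NU$ with $U\in\GL_n(\OK_K)$. Then $U\in T_{M,N;K}$. I claim that $T_{M,N;K}=U R_{M;K}$. The argument is the same as in Lemma \ref{t1}, now done globally: if $A\in T_{M,N;K}$, then $U^{-1}A$ is integral and commutes with $M$, so $U^{-1}A\in R_{M;K}$. Conversely, $Ur\in T_{M,N;K}$ for every $r\in R_{M;K}$. The map $r\mapsto Ur$ is injective, so $T_{M,N;K}\cong R_{M;K}$ as $R_{M;K}$-modules, and hence its class in $\Pic(R_{M;K})$ is trivial.

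Conversely, suppose $\phi\colon R_{M;K}\to T_{M,N;K}$ is an isomorphism of $R_{M;K}$-modules, and put $U:=\phi(1)\in T_{M,N;K}$. Then $T_{M,N;K}=U R_{M;K}$, since $\phi(r)=\phi(1)r=Ur$ for the module structure $r(A)=Ar$. Localizing at a finite place $\p$ and comparing with Lemma \ref{t1} gives
\[U(R_{M;K})_{\p}=U_{\p}(R_{M;K})_{\p}.\]
Hence $U=U_{\p}r_{\p}$ and $U_{\p}=Us_{\p}$ for some $r_{\p},s_{\p}\in(R_{M;K})_{\p}$. This gives $U_{\p}=U_{\p}r_{\p}s_{\p}$, and since $U_{\p}$ is invertible, $r_{\p}s_{\p}=1$. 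Thus $r_{\p}$ is a unit of $\Mat_n(\OK_{K_{\p}})$, so $U=U_{\p}r_{\p}\in\GL_n(\OK_{K_{\p}})$, i.e.\ $\det U\in\OK_{K_{\p}}^\times$. As this holds for every finite $\p$, we get $\det U\in\OK_K^\times$, so $U\in\GL_n(\OK_K)$ and $UM=NU$, which is the required similarity.

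The only delicate point is the identification of localizations: $(T_{M,N;K})_{\p}$ should be read as $T_{M,N;K}\otimes_{\OK_K}\OK_{K_{\p}}$. Localization commutes with the finite presentations involved, since $T_{M,N;K}$ and $R_{M;K}$ are kernels of $\OK_K$-linear maps and $\OK_{K_{\p}}$ is flat over $\OK_K$. These are the same completions at which Lemma \ref{t1} was established, so the local equality above holds at every finite place and the argument goes through.
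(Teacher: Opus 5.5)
Your proof is correct. The paper gives no argument for this lemma; it only cites Faddeev's result via \cite[Theorem 6]{gu80}. So your proposal supplies a self-contained proof rather than following or departing from one in the text, and it is the standard argument.

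For the converse, a generator $U=\phi(1)$ of $T_{M,N;K}$ as a cyclic $R_{M;K}$-module differs from the local intertwiner $U_{\p}$ by a unit of $(R_{M;K})_{\p}$. Hence $\det U$ is a $\p$-adic unit for every finite $\p$, and so lies in $\OK_K^{\times}$. The forward direction is the global version of the computation in Lemma~\ref{t1}, and the same computation reappears in the proof of Theorem~\ref{t4}.

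What your route buys is that it uses only material already in the paper: Lemma~\ref{t1}, and the flatness argument of Lemma~\ref{t3} applied to $\OK_{K_{\p}}$ instead of $\OK_L$. It also shows that the non-derogatory hypothesis is used only to make $R_{M;K}$ commutative, so that $\Pic(R_{M;K})$ makes sense.

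You could shorten the converse slightly, since only the inclusion $U_{\p}\in U(R_{M;K})_{\p}$ is needed. Writing $U_{\p}=Us_{\p}$ already gives $\det U\cdot\det s_{\p}=\det U_{\p}\in\OK_{K_{\p}}^{\times}$, with both factors in $\OK_{K_{\p}}$. Hence $\det U\in\OK_{K_{\p}}^{\times}$ without needing $r_{\p}s_{\p}=1$.
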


Hence, $[T_{M,N;K}]$ measures the failure of the local integral similarities to glue to a global integral similarity. The following lemma describes the behavior of this class under base-change.

\begin{lemma}\label{t3}
Let $L$ be a finite extension of $K$ with ring of integers $\OK_L$, we have
\begin{itemize}
\item $R_{M;K}\otimes_{\OK_K}\OK_L=\{A\in\Mat_n(\OK_L):AM=MA\}$.
\item $T_{M,N;K}\otimes_{\OK_K}\OK_L=\{A\in\Mat_n(\OK_L):AM=NA\}$.
\end{itemize}
\end{lemma}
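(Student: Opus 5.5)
The plan is to recognize both sets as kernels of $\OK_K$-linear maps between finite free $\OK_K$-modules, and then use flatness of $\OK_L$ over $\OK_K$ so that taking kernels commutes with base change.

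For the first item, consider the $\OK_K$-linear map
\[\phi_K:\Mat_n(\OK_K)\to\Mat_n(\OK_K),\qquad A\mapsto AM-MA.\]
Its kernel is $R_{M;K}$. For the second item use $\psi_K:A\mapsto AM-NA$, whose kernel is $T_{M,N;K}$. Both maps are defined by matrices with entries in $\OK_K$, so tensoring with $\OK_L$ gives
\[\phi_K\otimes\OK_L=\phi_L:\Mat_n(\OK_L)\to\Mat_n(\OK_L),\qquad A\mapsto AM-MA,\]
via the canonical identification $\Mat_n(\OK_K)\otimes_{\OK_K}\OK_L=\Mat_n(\OK_L)$. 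The same holds for $\psi$. This identification holds because $\Mat_n(\OK_K)\cong\OK_K^{n^2}$ is free.

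Next, $\OK_L$ is a finitely generated torsion-free module over the Dedekind domain $\OK_K$. Hence it is projective, and in particular flat. Tensoring the exact sequence
\[0\to R_{M;K}\to\Mat_n(\OK_K)\xrightarrow{\phi_K}\Mat_n(\OK_K)\]
with $\OK_L$ therefore gives an exact sequence
\[0\to R_{M;K}\otimes_{\OK_K}\OK_L\to\Mat_n(\OK_L)\xrightarrow{\phi_L}\Mat_n(\OK_L).\]
This identifies $R_{M;K}\otimes_{\OK_K}\OK_L$ with $\ker\phi_L=\{A\in\Mat_n(\OK_L):AM=MA\}$. The argument for $T_{M,N;K}$ is identical, with $\psi$ in place of $\phi$.

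The one point to be careful about is that these are equalities inside $\Mat_n(\OK_L)$. That is, the natural map $R_{M;K}\otimes\OK_L\to\Mat_n(\OK_L)$ must be injective, with image equal to the kernel. Flatness gives exactly this, so injectivity does not need a separate check. I expect no real obstacle here. If one wants to avoid citing flatness, an alternative is to note that $\OK_L$ is locally free over $\OK_K$ and check the statement after localizing at each $\p$, where $\OK_L$ becomes a free module and the statement reduces to taking kernels coordinatewise.
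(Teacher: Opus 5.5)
Your proposal is correct and follows essentially the same route as the paper. The paper also writes $R_{M;K}$ and $T_{M,N;K}$ as kernels of the $\OK_K$-linear maps $A\mapsto AM-MA$ and $A\mapsto AM-NA$ on $\Mat_n(\OK_K)$, and uses flatness of $\OK_L$ over $\OK_K$ to commute taking kernels with base change; your explicit justifications of flatness (finitely generated torsion-free over a Dedekind domain, hence projective) and of $\Mat_n(\OK_K)\otimes_{\OK_K}\OK_L=\Mat_n(\OK_L)$ just fill in details the paper leaves implicit.
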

\begin{proof}
Consider the $\OK_K$-linear map
\[\Phi_M:\Mat_n(\OK_K)\longrightarrow\Mat_n(\OK_K),\quad A\longmapsto AM-MA,\]
one can verify $R_{M;K}=\ker(\Phi_M)$. Note that $\OK_L$ is flat over $\OK_K$, we have
\[R_{M;K}\otimes_{\OK_K}\OK_L=\ker(\Phi_M)\otimes_{\OK_K}\OK_L=\ker(\Phi_M\otimes_{\OK_K}\OK_L)=\{A\in\Mat_n(\OK_L):AM=MA\}.\]
The proof for the second item is the same: we only need to consider
\[\Psi_{M,N}:\Mat_n(\OK_K)\longrightarrow\Mat_n(\OK_K),\quad A\longmapsto AM-NA,\]
then $T_{M,N;K}=\ker(\Psi_{M,N})$. By flatness, one obtains $T_{M,N;K}\otimes_{\OK_K}\OK_L=\ker(\Psi_{M,N}\otimes_{\OK_K}\OK_L)=\{A\in\Mat_n(\OK_L):AM=NA\}$.
\end{proof}

By Lemma \ref{t3}, there is a finite flat morphism $i:\Spec(R_{M;L})\to\Spec(R_{M;K})$ of degree $[L:K]$, and one has $i^*([T_{M,N;K}])=[T_{M,N;L}]$.

\begin{theorem}\label{t4}
Let $K$ be a number field, and let $M,N\in\Mat_n(\OK_K)$ be non-derogatory matrices. If $M$ and $N$ are similar over $\OK_{K_{\p}}$ for each finite place $\p$ of $K$, then
\[C_K(M,N)\geq\mathrm{ord}_{\Pic(R_{M;K})}([T_{M,N;K}]).\]
\end{theorem}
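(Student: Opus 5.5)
The plan is to use a norm (pushforward) argument. Let $L/K$ be a finite extension of degree $d$ such that $M$ and $N$ are similar over $\OK_L$. We will show that $[T_{M,N;K}]^d$ is trivial in $\Pic(R_{M;K})$. This gives $\mathrm{ord}([T_{M,N;K}])\mid d$, so in particular $d\geq\mathrm{ord}([T_{M,N;K}])$. Taking the minimum over all such $L$ then yields the claimed bound on $C_K(M,N)$.

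First, we apply Lemma \ref{t2} over $L$. Its hypotheses hold there, because local similarity over each $\OK_{K_\p}$ implies local similarity over each $\OK_{L_\pp}$. Since $M$ and $N$ are similar over $\OK_L$, the lemma shows that $[T_{M,N;L}]$ is trivial in $\Pic(R_{M;L})$. By Lemma \ref{t3} and the remark after it, $R_{M;L}=R_{M;K}\otimes_{\OK_K}\OK_L$ and $[T_{M,N;L}]=i^*[T_{M,N;K}]$. Hence $i^*[T_{M,N;K}]=0$.

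Second, we need a norm map $N_i:\Pic(R_{M;L})\to\Pic(R_{M;K})$ satisfying $N_i\circ i^*=[\,d\,]$, i.e.\ multiplication by $d$. The morphism $i$ is finite and flat of degree $d$, since $\OK_L$ is locally free of rank $d$ over $\OK_K$. For such morphisms the determinant-of-pushforward construction supplies a norm on line bundles, $N(\mathcal L)=\det(i_*\mathcal L)\otimes\det(i_*\mathcal O)^{-1}$, and it satisfies $N(i^*\mathcal L)=\mathcal L^{\otimes d}$. To avoid heavy machinery, I will instead argue directly with local generators. Cover $\Spec R_{M;K}$ by opens on which $T=T_{M,N;K}$ is free, with transition functions $g_{jk}\in (R_{M;K})^\times_{jk}$. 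Pulling back, $T\otimes\OK_L$ has the same transition functions, now viewed in $R_{M;L}$. Triviality over $L$ means $g_{jk}=h_j/h_k$ for units $h_j$ of $R_{M;L}$ over the respective opens. Now apply the norm $\mathrm{Nm}:R_{M;L}\to R_{M;K}$, defined as the determinant of multiplication on the locally free $R_{M;K}$-module $R_{M;L}$ of rank $d$. It is multiplicative, sends units to units, and satisfies $\mathrm{Nm}(g)=g^d$ for $g\in R_{M;K}$. Therefore $g_{jk}^d=\mathrm{Nm}(h_j)/\mathrm{Nm}(h_k)$ is a coboundary, so $[T]^d=0$. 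Because $R_{M;K}$ is an order, local freeness can equivalently be checked at the finite places $\p$, and one can use the semilocal rings $(R_{M;K})_\p$ for the cover.

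The main obstacle is this second step: making the norm rigorous on $\Pic$ of a possibly non-regular order. In particular, one must check that $R_{M;L}$ is locally free of rank $d$ over $R_{M;K}$, which follows from $R_{M;L}=R_{M;K}\otimes\OK_L$ and local freeness of $\OK_L$ over $\OK_K$. One must also check that the determinant norm is compatible with the cocycle description. Once this is settled, the divisibility $\mathrm{ord}\mid [L:K]$ follows immediately. Note that this is stronger than the stated inequality.
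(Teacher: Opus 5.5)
Your proposal is correct and follows the paper's argument. The paper likewise shows that $[T_{M,N;L}]=i^*[T_{M,N;K}]$ is trivial (directly, via $T_{M,N;L}=UR_{M;L}$, rather than through Lemma \ref{t2}), and then concludes $[L:K]\cdot[T_{M,N;K}]=0$ using the norm for the finite locally free morphism $i$; it cites EGA IV 21.5.6 for that norm, whereas you construct it by hand via the determinant norm on \v{C}ech cocycles.

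One small fix to your last remark: the semilocal rings $(R_{M;K})_\p$ are not Zariski opens, so they cannot serve as a cover. Use instead a finite cover by principal opens $D(s)$ with $s\in\OK_K$ on which $T_{M,N;K}$ is free. Such a cover exists because $T_{M,N;K}$ is finitely presented and becomes free after localizing at each $\p$.
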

\begin{proof}
Suppose $L/K$ is a finite extension such that $M$ and $N$ are similar over $\OK_L$, that is, $UM=NU$ for some $U\in\GL_n(\OK_L)$. For $A\in T_{M,N;L}$ we have $U^{-1}A\in R_{M;L}$, so $T_{M,N;L}=UR_{M;L}$ as $R_{M;L}$-modules. This is obviously a rank one free module, so $i_*([T_{M,N;L}])=0$. By the projection formula \cite[Proposition 21.5.6]{gd67},
\[0=i_*i^*([T_{M,N;K}])=[L:K]\cdot[T_{M,N;K}]\in\Pic(R_{M;K}),\]
thus the order of $[T_{M,N;K}]$ in $\Pic(R_{M;K})$ divides $[L:K]$. Taking the minimum over all finite extensions $L/K$ for which $M,N$ are similar over $\OK_L$ gives the desired lower bound.
\end{proof}

Based on the above estimation, we give a proof of Theorem \ref{main1} by explicit construction.

\begin{theorem}\label{t5}
For any positive integer $C_0$, there exist two matrices $M_0,N_0\in\Mat_n(\Z)$ which are similar over each $\Z_p$, such that $C_{\Q}(M_0,N_0)\geq C_0$.
\end{theorem}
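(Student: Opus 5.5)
By Theorem \ref{t4}, it suffices to construct non-derogatory $M_0,N_0\in\Mat_n(\Z)$ that are similar over every $\Z_p$ and whose class $[T_{M_0,N_0;\Q}]$ has order at least $C_0$ in $\Pic(R_{M_0;\Q})$. The plan is to use the Latimer--MacDuffee correspondence. Choose a number field $F$ and an order whose Picard group has an element of large order, then let $M_0,N_0$ be multiplication by a generator acting on two different invertible ideals of that order.

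\textbf{Choice of order.} Pick a monic irreducible $f\in\Z[t]$ of degree $n$ with root $\alpha$, and let $\mathcal{O}=\Z[\alpha]$. Since $\Pic(\Z[\alpha])$ surjects onto the class group of $F=\Q(\alpha)$, it is enough to find $F$ whose class group contains an element of order $\ge C_0$.

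\begin{itemize}
\item The simplest route takes $n=2$ and imaginary quadratic fields $\Q(\sqrt{-d})$, using $\mathcal{O}=\Z[\sqrt{-d}]$. The form class groups of discriminant $-4d$ contain elements of arbitrarily large order. For example, with $d=2^{2k}+1$... a cleaner choice is $d$ such that $x^2+dy^2$ represents a power $p^m$ only for large $m$: take $-d=1-4\cdot 2^{m}$ in $\Z[(1+\sqrt{-d})/2]$. Then $\mathfrak{p}\mid 2$ satisfies $\mathfrak{p}^{m}=((1+\sqrt{-d})/2)$ while no smaller power is principal, by a norm comparison ($2^j=x^2+xy+\frac{d+1}{4}y^2$ forces $y=0$ for $j<m$).
\item Alternatively, one can simply cite that class numbers, and the exponents of class groups, of imaginary quadratic fields tend to infinity.
\end{itemize}

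I would use an explicit family like the one above, verifying non-principality of $\mathfrak{p}^j$ for $j<m$ by that norm inequality. This gives an order $\mathcal{O}=\Z[\alpha]$ and an invertible ideal $I$ with $[I]$ of order $\ge C_0$. If the general-$n$ statement is wanted, $n$ should be a fixed $2$ or padded. Note that the theorem only claims existence for some $n$.

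\textbf{Matrices.} Let $M_0$ be the companion matrix of $f$, i.e.\ multiplication by $\alpha$ on the basis $1,\alpha,\dots$ of $\mathcal{O}$. Let $N_0$ be the matrix of multiplication by $\alpha$ on a $\Z$-basis of $I$. Both are integral with characteristic polynomial $f$ and are non-derogatory because $f$ is irreducible. Since $I$ is locally principal, $I_p=\beta_p\mathcal{O}_p$ for each $p$. Transporting bases then gives $U_p\in\GL_2(\Z_p)$ with $U_pM_0=N_0U_p$, so the two matrices are similar over every $\Z_p$.

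\textbf{Computing the class.} Here $R_{M_0;\Q}=\{A:AM_0=M_0A\}\cap\Mat_n(\Z)$, which for a companion matrix of an irreducible polynomial is $\Z[M_0]\cong\mathcal{O}$; this holds because $\Z[\alpha]$ is its own multiplier ring. Next, $T_{M_0,N_0;\Q}$ consists of the $\Z$-linear maps $\mathcal{O}\to I$ commuting with $\alpha$. These are $\mathcal{O}$-module maps, i.e.\ multiplication by elements of $(I:\mathcal{O})=I$. Hence $[T_{M_0,N_0;\Q}]=\pm[I]$ in $\Pic(\mathcal{O})$, where the sign depends on the convention of right multiplication, and either way the order is the same. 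Theorem \ref{t4} then yields $C_\Q(M_0,N_0)\ge\mathrm{ord}[I]\ge C_0$.

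\textbf{Main obstacle.} The main obstacle is the explicit verification that the chosen ideal class has order $\ge C_0$, that is, the norm-equation argument. The identification $T\cong I$ is routine, and I would simply state it.
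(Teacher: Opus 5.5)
Your argument is correct, but it reaches the obstruction by a different route from the paper's.

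\textbf{How the two constructions differ.} The paper also works with $n=2$, but it uses the \emph{split} polynomial $(t-1)(t-1-q)$. Then $R_{M_0';\Q}\cong\{(u,v)\in\Z^{2}:u\equiv v \pmod q\}$ has normalization $\Z^{2}$, whose Picard group is trivial, so all of $\Pic$ comes from the conductor. A Milnor square gives $\Pic(R_{M_0';\Q})\cong\mathbb{F}_q^{\times}/\{\pm1\}$. The class of $T$ is the image of a primitive root $b$, of order $(q-1)/2$, and the local intertwiners are written down explicitly.

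You instead use the Latimer--MacDuffee correspondence with the irreducible $\chi=t^2-t+2^m$. The obstruction is then a genuine ideal class of the imaginary quadratic order $\Z[\omega]$. The facts $R\cong\Z[\omega]$ and $T\cong I$, and the local similarities, all follow from local principality of $I$.

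Your version is more conceptual and shows that any order with a Picard element of large order will do. The paper's choice buys sharpness. Because its $\chi$ splits over $\Q$, the same pair falls under Theorem~\ref{t13}, and Example~\ref{t15} shows that $C_{\Q}(M_0',N_0')=(q-1)/2$ exactly. Your non-split example does not directly give this.

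\textbf{Details to tighten.}
\begin{itemize}
\item Run the norm argument directly in $\Pic(\Z[\omega])$. Here $d=2^{m+2}-1$ need not be squarefree, so $\Z[\omega]$ may be non-maximal. However, its discriminant $1-2^{m+2}$ is odd and $\equiv 1 \pmod 8$. Hence $2=\mathfrak{p}\bar{\mathfrak{p}}$ with $\mathfrak{p}=(2,\omega)\neq\bar{\mathfrak{p}}=(2,1-\omega)$, both invertible.
\item The equality $(\omega)=\mathfrak{p}^m$ holds because $\omega(1-\omega)=2^m$ and $\omega+(1-\omega)=1$.
\item The case $y=0$ is excluded because $(2^{j/2})=(\mathfrak{p}\bar{\mathfrak{p}})^{j/2}\neq\mathfrak{p}^{j}$ for $0<j<m$.
\item The fallback appeal to exponents of class groups tending to infinity is unnecessary and far stronger than you need; the explicit family suffices.
\end{itemize}

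\textbf{General $n$.} The paper proves the statement for every $n\geq 2$, and Theorem~\ref{main1} needs this, so you should carry out the padding. Take $M_0=\mathrm{diag}(M_0',2I_{n-2})$ and $N_0=\mathrm{diag}(N_0',2I_{n-2})$. Since $2$ is not a root of $\chi$, any intertwiner over $\OK_L$ is block diagonal, and hence $C_{\Q}(M_0,N_0)\ge C_{\Q}(M_0',N_0')$. You cannot simply apply Theorem~\ref{t4} to the padded matrices, because they are derogatory once $n\ge 4$.
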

\begin{proof}
We first focus on the special case of $n=2$. Consider two matrices over $\Z$,
\[M_0'=\left(\begin{array}{cc}1&b\\0&1+q\end{array}\right)\quad\text{and}\quad N_0'=\left(\begin{array}{cc}0&1\\-(1+q)&2+q\end{array}\right),\]
where $q$ is a odd prime in $\Z$ such that $q\geq2C_0+1$, $b\in\Z$ is a primitive root modulo $q$.

First, we claim $M_0'$ and $N_0'$ are similar over each $\Z_p$, where $p\in\Z$ is a prime. If $p=q$, note that $\gcd(q,b)=1$, one can verify $$U=\left(\begin{array}{cc}1&0\\1&b\end{array}\right)\in\GL_2(\Z_q)$$ satisfies $UM_0'=N_0'U$. If $p\neq q$, one can also verify $$V=\left(\begin{array}{cc}1&(1-b)q^{-1}\\1&(1-b+q)q^{-1}\end{array}\right)\in\GL_2(\Z_p)$$ satisfies $VM_0'=N_0'V$.

Next, we compute $\Pic(R_{M_0';\Q})$. By definition, our $R_{M_0';\Q}$ can be identified with
\[\left\{\left(\begin{array}{cc}x&by\\0&x+qy\end{array}\right):x,y\in\Z\right\}\overset{\sim}{\longrightarrow}\{(u,v)\in\Z^{\oplus2}:u\equiv v(\mo~q)\},\quad\left(\begin{array}{cc}x&by\\0&x+qy\end{array}\right)\longmapsto(x,x+qy),\]
that is,
\[R_{M_0';\Q}=\Z I_2+\Z(M_0'-I_2)=\Z[M_0']\cong\Z[t]/(t-1)(t-1-q),\]
whose normalization is $\Z^{\oplus2}$ and the conductor is $\mathfrak{f}=(q\Z)^{\oplus2}$. Consider the Milnor conductor square
\[\xymatrix{R_{M_0';\Q}\ar[d]\ar[r]&\Z^{\oplus2}\ar[d]\\R_{M_0';\Q}/\mathfrak{f}\ar[r]&\Z^{\oplus2}/\mathfrak{f}}\]
where one can compute $R_{M_0';\Q}/\mathfrak{f}\cong\mathbb{F}_q$ and $\Z^{\oplus2}/\mathfrak{f}\cong\mathbb{F}_q^{\oplus2}$ directly. By the Units-Pic sequence \cite[Theorem I.3.10]{we13} we obtain
\[\Pic(R_{M_0';\Q})\cong(\mathbb{F}_q^{\times}\times\mathbb{F}_q^{\times})/(\Delta(\mathbb{F}_q^{\times})\cdot(\{\pm1\}\times\{\pm1\}))\cong\mathbb{F}_q^{\times}/\{\pm1\}.\]

Finally, we determine $[T_{M_0',N_0';\Q}]$. Let $$W=\left(\begin{array}{cc}(1-b+q)q^{-1}&(b-1)q^{-1}\\-1&1\end{array}\right)\in\GL_2(\Q)$$ which is chosen so that $WT_{M_0',N_0';\Q}$ is identified with a fractional $R_{M_0';\Q}$-module inside $\Q[M_0']\cong\Q^{\oplus2}$:
\[WT_{M_0',N_0';\Q}=\left\{\left(\begin{array}{cc}x&y+b(b-1)q^{-1}x\\0&bx+qy\end{array}\right):x,y\in\Z\right\}\overset{\sim}{\longrightarrow}\{(u,v)\in\Z^{\oplus2}:v\equiv bu(\mo~q)\}.\]
So $WT_{M_0',N_0';\Q}$ is a rank one locally free $R_{M_0';\Q}$-module, where $W$ is principal. The order of $[T_{M_0',N_0';\Q}]$ in $\Pic(R_{M_0';\Q})$ is equal to
\[\mathrm{ord}_{\mathbb{F}_q^{\times}/\{\pm1\}}([b])=\frac{q-1}{2},\]
because the image of $[WT_{M_0',N_0';\Q}]$ in $\mathbb{F}_q^{\times}/\{\pm1\}$ is $[b]$, which is primitive, under a suitable choice of the identification. Thus, by Theorem \ref{t4}, $C_{\Q}(M_0',N_0')\geq\frac{q-1}{2}\geq C_0$.

For the general case $n\geq2$, let
\[M_0=\left(\begin{array}{cc}M_0'&\\&2I_{n-2}\end{array}\right)\quad\text{and}\quad N_0=\left(\begin{array}{cc}N_0'&\\&2I_{n-2}\end{array}\right).\]
For a number field $L/\Q$, if $M_0$ and $N_0$ are similar over $\OK_L$, it is easy to check $M_0'$ and $N_0'$ are similar over $\OK_L$. Hence $C_{\Q}(M_0,N_0)\geq C_{\Q}(M_0',N_0')\geq \frac{q-1}{2}\geq C_0$.
\end{proof}

\begin{remark}\label{t6}
Let $M_0'$ and $N_0'$ be the matrices defined in the proof of Theorem \ref{t5}, one can show $C_{\Q}(M_0',N_0')\leq\frac{q-1}{2}$, see Example \ref{t15} below. Therefore, we have
\[C_{\Q}(M_0',N_0')=\frac{q-1}{2},\]
which means our lower and upper bounds are optimal in a certain sense.
\end{remark}

\section{The Upper Bounds}\label{s4}
Although Theorem \ref{t5} claims that $C_K(\cdot,\cdot)$ admits no uniform upper bound, $C_K(\cdot,\cdot)$ does possess a uniform upper bound (Theorem \ref{t13}) when the characteristic polynomial is fixed and assumed to be separable. To determine this bound, we employ a criterion for the existence of integral points (Corollary \ref{t9}); the conditions imposed by this criterion on the id\`{e}le class group allow us to construct the required field extensions via the class field theory. As an application, we conclude this section with Example \ref{t15}, which completes the discussion on the upper bound initiated in Remark \ref{t6}.
\subsection{Existence of Integral Points}\label{s4.1}
Let $K$ be a number field with ring of integers $\OK_K$, denote $\Omega_K$ the set of all places of $K$. For $\p\in\Omega_K$ a finite place, the completion of $K$ (resp. $\OK_K$) at $\p$ is denoted as $K_{\p}$ (resp. $\OK_{K_{\p}}$). We write $\OK_{K_{\p}}:=K_{\p}$ if $\p\in\Omega_K$ is infinite. The ad\`{e}le ring of $K$ is denoted as $\A_K$, whose units form a group $\A_K^{\times}$ and we call it the id\`{e}le group of $K$.

Let $M,N\in\Mat_n(\OK_K)$ be non-derogatory matrices, define a closed subscheme $\X_{M,N;K}$ of $\GL_n$ over $\OK_K$ as
\[\X_{M,N;K}:=\{A\in\GL_n:AM=NA\}.\]
Suppose $M,N$ are similar over each $\OK_{K_{\p}}$ for $\p$ a place of $K$, that is,
\[\prod_{\p\in\Omega_K}\X_{M,N;K}(\OK_{K_{\p}})\neq\emptyset.\]
In particular, $M,N$ are similar over $K$. Therefore, the generic fiber $X_{M,N;K}:=\X_{M,N;K}\times_{\OK_K}K$ has a $K$-point $P$. For any $K$-algebra $R$, the map $A\mapsto P^{-1}A$ induces a bijection
\begin{align*}
X_{M,N;K}(R)&=\{A\in\GL_n(R):AM=NA\}\\
&=\{A\in\GL_n(R):AM=PMP^{-1}A\}\\
&=\{A\in\GL_n(R):P^{-1}AM=MP^{-1}A\}\\
&\overset{\sim}{\to}\{B\in\GL_n(R):BM=MB\}.
\end{align*}
Let $\chi(t)$ denote the characteristic polynomial of $M$, note that the map
\[(R[t]/\chi(t))^{\times}\overset{\sim}{\longrightarrow}X_{M,N;K}(R),\quad g(t)\longmapsto Pg(M)\]
is a bijection, because any matrix that commutes with $M$ must be a polynomial in $M$. Hence, if we denote $G:=\Res_{(K[t]/\chi(t))/K}(\GG_m)$, the action
\[X_{M,N;K}\times_KG\longrightarrow X_{M,N;K},\quad(B,g)\longmapsto Bg(M)\]
defines a $G$-torsor over $K$.

In particular, if we assume $\chi(t)=\prod_{i=1}^n(t-a_i)$ splits into distinct roots in $K$ (indeed, in $\OK_K$), then the above bijection becomes $(R^{\times})^{\oplus n}\overset{\sim}{\to}X_{M,N;K}(R)$. In this situation, $G=\GG_m^n$ over $K$ and $X_{M,N;K}$ is a trivial $\GG_m^n$-torsor over $K$.

We refer to \cite{wx12} and \cite{wx13} to introduce a criterion for the existence of integer points on $\X_{M,N;K}$. To this end, we shall henceforth assume that $\chi(t)$ is split and separable, so $G=\GG_m^n$ over $K$.

Since $X_{M,N;K}$ is a trivial $G$-torsor over $K$, one can fix an isomorphism $X_{M,N;K}\cong G$ induced by a $K$-point $P$. Note that $G(\A_K)=(\A_K^{\times})^{\oplus n}$, we define the map
\[f_{M,N;K}:\prod_{\p\in\Omega_K}\X_{M,N;K}(\OK_{K_{\p}})\hookrightarrow X_{M,N;K}(\A_K)\cong G(\A_K)=(\A_K^{\times})^{\oplus n}.\]
This map is well-defined because $\X_{M,N;K}$ is separated over $\OK_K$.

Consider the following open subgroup of $(\A_K^{\times})^{\oplus n}$ \cite[Lemma 1.2]{wx12}:
\[\stab_{\A}(\X_{M,N;K}):=(\A_K^{\times})^{\oplus n}\cap\left(\prod_{\p\in\Omega_K}\stab(\X_{M,N;K}(\OK_{K_{\p}}))\right),\]
where
\[\stab(\X_{M,N;K}(\OK_{K_{\p}})):=\left\{g\in(K_{\p}^{\times})^{\oplus n}:g\X_{M,N;K}(\OK_{K_{\p}})=\X_{M,N;K}(\OK_{K_{\p}})\right\}\]
for each finite place $\p$ of $K$ and the action
\[g:X_{M,N;K}(K_{\p})\longrightarrow X_{M,N;K}(K_{\p}),\quad B\longmapsto Bg(M)\]
is continuous. Here we can view $\X_{M,N;K}(\OK_{K_{\p}})$ as an open subset of $X_{M,N;K}(K_{\p})$, since $\X_{M,N;K}$ is separated over $\OK_K$. Then, $\X_{M,N;K}(\OK_K)\neq\emptyset$ if and only if
\[\mathrm{im}(f_{M,N;K})\cap\left((K^{\times})^{\oplus n}\cdot\stab_{\A}(\X_{M,N;K})\right)\neq\emptyset,\]
by \cite[Proposition 1.4]{wx12}. Indeed, this result can be generalized to a more general case immediately.

\begin{proposition}\label{t8}
Let $\Xi$ be an open subgroup of $\stab_{\A}(\X_{M,N;K})$, then $\X_{M,N;K}(\OK_K)\neq\emptyset$ if and only if
\[\mathrm{im}(f_{M,N;K})\cap\left((K^{\times})^{\oplus n}\cdot\Xi\right)\neq\emptyset.\]
\end{proposition}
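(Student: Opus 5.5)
Both directions reduce to the case $\Xi=\stab_{\A}(\X_{M,N;K})$, which is \cite[Proposition 1.4]{wx12}, together with the observation that $\mathrm{im}(f_{M,N;K})$ is stable under the full stabilizer group.

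\emph{If direction.} Assume $x\in\mathrm{im}(f_{M,N;K})\cap\bigl((K^{\times})^{\oplus n}\cdot\Xi\bigr)$. Since $\Xi\subseteq\stab_{\A}(\X_{M,N;K})$, we get
\[(K^{\times})^{\oplus n}\cdot\Xi\subseteq(K^{\times})^{\oplus n}\cdot\stab_{\A}(\X_{M,N;K}).\]
Hence the intersection in \cite[Proposition 1.4]{wx12} is nonempty, and so $\X_{M,N;K}(\OK_K)\neq\emptyset$.

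\emph{Only if direction.} Let $A\in\X_{M,N;K}(\OK_K)$. Its diagonal image in $\prod_{\p}\X_{M,N;K}(\OK_{K_{\p}})$ is a point of $\mathrm{im}(f_{M,N;K})$. Under the identification $X_{M,N;K}\cong G$ induced by the $K$-point $P$, the global point $A\in X_{M,N;K}(K)$ corresponds to an element of $G(K)=(K^{\times})^{\oplus n}$. Concretely, $P^{-1}A$ commutes with $M$, so $P^{-1}A=g(M)$ with $g\in(K[t]/\chi)^{\times}=(K^{\times})^{\oplus n}$. Thus $f_{M,N;K}(A)=g\in(K^{\times})^{\oplus n}$, and since $1\in\Xi$ we get $g\in(K^{\times})^{\oplus n}\cdot\Xi$. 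The intersection is therefore nonempty.

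The only point needing care is that the identification is compatible with the embedding $G(K)\hookrightarrow G(\A_K)$ and with the global-to-adelic map on $X_{M,N;K}$. This holds because the isomorphism $X_{M,N;K}\cong G$, $B\mapsto P^{-1}B$, is defined over $K$ and hence is functorial in the $K$-algebra $R$; apply this to $K\to\A_K$. Openness of $\Xi$ is not used in either direction.
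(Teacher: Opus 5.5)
Your proof is correct and follows the paper's route. The paper gives no separate argument and only says the statement follows immediately from \cite[Proposition 1.4]{wx12}; your two steps supply exactly the omitted details, namely the containment $(K^{\times})^{\oplus n}\cdot\Xi\subseteq(K^{\times})^{\oplus n}\cdot\stab_{\A}(\X_{M,N;K})$ for ``if'', and the fact that a global integral point maps into $(K^{\times})^{\oplus n}$ for ``only if'' (the stability of $\mathrm{im}(f_{M,N;K})$ under $\stab_{\A}(\X_{M,N;K})$ announced in your first sentence is never actually used, since you invoke \cite[Proposition 1.4]{wx12} as a black box).
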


Since $\Xi$ is open in $(\A_K^{\times})^{\oplus n}$, we can find $\{\Xi_i\}_{i=1}^n$ such that each $\Xi_i$ is an open subgroup in $\A_K^{\times}$ and $\prod_{i=1}^n\Xi_i\subseteq\Xi$. By the class field theory, there is a finite abelian extension $K_i/K$ such that the Artin map
\[\phi_{K_i/K}:\A_K^{\times}/K^{\times}\Xi_i\overset{\sim}{\longrightarrow}\Gal(K_i/K)\]
gives the isomorphism for $1\leq i\leq n$. Hence, Proposition \ref{t8} can be restated as follows.

\begin{corollary}[{\cite[Corollary 1.6]{wx12}}]\label{t9}
Let $\{\Xi_i\}_{i=1}^n$ be as above, denote $f_{M,N;K}^i$ the $i$-th component of $f_{M,N;K}$. Then, $\X_{M,N;K}(\OK_K)\neq\emptyset$ if and only if there is $x\in\prod_{\p\in\Omega_K}\X_{M,N;K}(\OK_{K_{\p}})$ such that $\phi_{K_i/K}(f_{M,N;K}^i(x))$ is trivial in $\Gal(K_i/K)$, for all $1\leq i\leq n$.
\end{corollary}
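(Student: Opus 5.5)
The plan is to deduce Corollary \ref{t9} directly from Proposition \ref{t8}, applied with the open subgroup $\Xi':=\prod_{i=1}^n\Xi_i\subseteq\Xi\subseteq\stab_{\A}(\X_{M,N;K})$. First I will check that $\Xi'$ is an open subgroup of $\stab_{\A}(\X_{M,N;K})$. Each $\Xi_i$ is open in $\A_K^{\times}$, so the finite product is open in $(\A_K^{\times})^{\oplus n}$, and it is a subgroup contained in $\Xi$ by construction. Proposition \ref{t8} then says that $\X_{M,N;K}(\OK_K)\neq\emptyset$ if and only if some $x\in\prod_{\p}\X_{M,N;K}(\OK_{K_{\p}})$ satisfies
\[f_{M,N;K}(x)\in (K^{\times})^{\oplus n}\cdot\prod_{i=1}^n\Xi_i.\]

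The second step is to split this membership into components. The subgroup $(K^{\times})^{\oplus n}\cdot\prod_i\Xi_i$ equals $\prod_{i=1}^n K^{\times}\Xi_i$, because both factors are products taken coordinatewise inside $(\A_K^{\times})^{\oplus n}$. Hence the displayed condition holds if and only if $f^i_{M,N;K}(x)\in K^{\times}\Xi_i$ for every $i$. Since the Artin map $\phi_{K_i/K}$ induces an isomorphism $\A_K^{\times}/K^{\times}\Xi_i\cong\Gal(K_i/K)$, its kernel on $\A_K^{\times}$ is exactly $K^{\times}\Xi_i$. Therefore $f^i_{M,N;K}(x)\in K^{\times}\Xi_i$ if and only if $\phi_{K_i/K}(f^i_{M,N;K}(x))$ is trivial. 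Combining the $n$ components gives the corollary.

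The only step that is more than bookkeeping is making sure Proposition \ref{t8} genuinely applies to the smaller group $\Xi'$ rather than only to $\stab_{\A}$. I expect to handle this as in \cite[Proposition 1.4]{wx12}. If some $x$ satisfies $f(x)=k\xi$ with $k\in(K^{\times})^{\oplus n}$ and $\xi\in\Xi'$, then $\xi$ stabilizes $\prod_{\p}\X_{M,N;K}(\OK_{K_{\p}})$. So $k=f(x)\xi^{-1}$ is the image of an adelic integral point, and it is also a rational point. Because $\X_{M,N;K}$ is separated, a $K$-point that is integral at every place lies in $\X_{M,N;K}(\OK_K)$. The converse is trivial: a global integral point $y$ gives $f(y)\in(K^{\times})^{\oplus n}$.
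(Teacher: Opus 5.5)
Your proposal is correct and follows the paper's route. The paper simply notes that Proposition~\ref{t8}, applied to $\prod_i\Xi_i\subseteq\Xi$, becomes the corollary once you write $(K^{\times})^{\oplus n}\prod_i\Xi_i=\prod_i K^{\times}\Xi_i$ coordinatewise and recognize $K^{\times}\Xi_i$ as the kernel of $\phi_{K_i/K}$. Your last paragraph re-deriving Proposition~\ref{t8} is unnecessary, since that proposition already covers any open subgroup of $\stab_{\A}(\X_{M,N;K})$, but it is a sound sketch; the integrality of the rational point there follows most directly from $\X_{M,N;K}$ being closed in $\GL_n$ together with $\OK_K=K\cap\prod_{\p}\OK_{K_{\p}}$.
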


\subsection{An Upper Bound}\label{s4.2}
In order to apply Proposition \ref{t8} and Corollary \ref{t9}, we shall compute $\stab_{\A}(\X_{M,N;K})$. This leads us to compute $\stab(\X_{M,N;K}(\OK_{K_{\p}}))$. By definition,
\[\stab(\X_{M,N;K}(\OK_{K_{\p}})):=\left\{g\in(K_{\p}^{\times})^{\oplus n}:g\X_{M,N;K}(\OK_{K_{\p}})=\X_{M,N;K}(\OK_{K_{\p}})\right\}.\]
\begin{lemma}\label{t10}
Let $g\in(K_{\p}^{\times})^{\oplus n}$, then $g\in\stab(\X_{M,N;K}(\OK_{K_{\p}}))$ if and only if $g(M)\in\GL_n(\OK_{K_{\p}})$.
\end{lemma}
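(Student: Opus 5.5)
We will prove the two implications separately, using the torsor description $X_{M,N;K}(K_{\p})=\{Pg(M):g\in(K_{\p}^{\times})^{\oplus n}\}$ together with the fact that $g(M)$ commutes with $M$. Since $\chi$ splits with distinct roots, we identify $g=(g_1,\dots,g_n)\in(K_{\p}^{\times})^{\oplus n}$ with the element of $K_{\p}[t]/\chi(t)\cong K_{\p}^{\oplus n}$ having these values at the roots $a_i$. Then $g(M)\in K_{\p}[M]$ is invertible in $\Mat_n(K_{\p})$, with inverse $g^{-1}(M)$. The action on $X_{M,N;K}(K_{\p})$ is $B\mapsto Bg(M)$. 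We assume throughout that $\p$ is finite; for infinite $\p$ we have $\OK_{K_{\p}}=K_{\p}$, and both conditions hold trivially because $g(M)$ is invertible over $K_{\p}$.

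($\Leftarrow$) Suppose $g(M)\in\GL_n(\OK_{K_{\p}})$. For $B\in\X_{M,N;K}(\OK_{K_{\p}})$, that is, $B\in\GL_n(\OK_{K_{\p}})$ with $BM=NB$, the product $Bg(M)$ lies in $\GL_n(\OK_{K_{\p}})$. It still intertwines $M$ and $N$, since $Bg(M)M=BMg(M)=NBg(M)$. Hence $g\,\X(\OK_{K_{\p}})\subseteq\X(\OK_{K_{\p}})$. The inverse $g^{-1}$ satisfies $g^{-1}(M)=g(M)^{-1}\in\GL_n(\OK_{K_{\p}})$, so the same argument gives the reverse inclusion, and therefore equality.

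($\Rightarrow$) Suppose $g\,\X(\OK_{K_{\p}})=\X(\OK_{K_{\p}})$. Here we use the standing hypothesis that $M$ and $N$ are locally similar at $\p$: there exists $U_{\p}\in\X(\OK_{K_{\p}})$. By assumption $U_{\p}g(M)\in\X(\OK_{K_{\p}})\subseteq\GL_n(\OK_{K_{\p}})$. Therefore
\[g(M)=U_{\p}^{-1}\bigl(U_{\p}g(M)\bigr)\in\GL_n(\OK_{K_{\p}}).\]

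The only subtle step is the nonemptiness of $\X(\OK_{K_{\p}})$ in the forward direction. Without it, every $g$ would stabilize the empty set and the equivalence would fail. We secure nonemptiness from the standing assumption $\prod_{\p}\X_{M,N;K}(\OK_{K_{\p}})\neq\emptyset$. Beyond that, the argument needs only the commutativity $g(M)M=Mg(M)$ and the compatibility of inverses, $(g^{-1})(M)=g(M)^{-1}$, which holds because $g\mapsto g(M)$ is a ring homomorphism from $K_{\p}[t]/\chi(t)$.
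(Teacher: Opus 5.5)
Your proof is correct and takes the same route as the paper. The paper simply notes that $Ag(M)M=NAg(M)$, so membership of $Ag(M)$ in $\X_{M,N;K}(\OK_{K_{\p}})$ comes down to integrality and invertibility; you write out both inclusions and make explicit that the forward direction needs $\X_{M,N;K}(\OK_{K_{\p}})\neq\emptyset$, which follows from the standing local-similarity hypothesis.
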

\begin{proof}
This can be verified directly from the definition. Note that $Ag(M)M=AMg(M)=NAg(M)$ for all $A\in\X_{M,N;K}(\OK_{K_{\p}})$, which means $Ag(M)$ always belongs to $\X_{M,N;K}(\OK_{K_{\p}})$.
\end{proof}

Hence, our first step is to characterize $g(M)\in\Mat_n(\OK_{K_{\p}})$ that are invertible, where $g\in(K_{\p}^{\times})^{\oplus n}$. In fact, determining the invertibility of such $g(M)$ is not straightforward. While one typically desires $M\in\Mat_n(\OK_K)$ to be triangulable, this property is generally guaranteed only over principal ideal domains \cite[Theorem III.12]{ne72}. However, by passing to a sufficiently large class field, we can obtain an analogous result (see Lemma \ref{t11}). Consequently, we restrict our attention to the invertibility of $g(M)$ when $K$ is replaced by a sufficiently large class field (see Proposition \ref{t12}).
\subsubsection{On the Invertibility of $g(M)$}\label{s4.2.1}
Let $K$ be a number field, there is a tower of fields
\[K=H_0\subseteq H_1\subseteq\cdots\subseteq H_{i-1}\subseteq H_i\subseteq\cdots,\]
where $H_i$ is the Hilbert class field of $H_{i-1}$, $i\geq1$.

\begin{lemma}\label{t11}
Let $M\in\Mat_n(\OK_K)$ with characteristic polynomial $\chi(t)=\prod_{i=1}^n(t-a_i)$. Then $M$ is similar to an upper triangular matrix in $\GL_n(\OK_{H_{n-1}})$.
\end{lemma}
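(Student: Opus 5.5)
We argue by induction on $n$, peeling off one eigenvector at a time and using the Hilbert class field to make the relevant ideal classes principal. The key fact is that every ideal of $\OK_{H_{i-1}}$ becomes principal in $\OK_{H_i}$ (the principal ideal theorem). We also use that $\OK_F$ is a Dedekind domain for every number field $F$, so finitely generated torsion-free modules are projective and of the form $\OK_F^{r-1}\oplus\mathfrak{a}$, with Steinitz class $[\mathfrak{a}]$.

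\textbf{Base step.} For $n=1$ there is nothing to prove, since $H_0=K$.

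\textbf{Splitting off an eigenvector.} Assume $n\ge 2$ and write $F=H_0=K$. Set
\[
L_1:=\ker(M-a_1I)\cap\OK_F^n,
\]
which is a nonzero saturated $\OK_F$-submodule, because $a_1$ is an eigenvalue. Pick a nonzero $v\in L_1$ and let
\[
W:=(Fv)\cap\OK_F^n .
\]
This is a rank one saturated submodule, isomorphic to a fractional ideal $\mathfrak{a}$. Since $W$ is saturated, $\OK_F^n/W$ is torsion-free, hence projective, and therefore $\OK_F^n=W\oplus Q$.

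\textbf{Making the summand free.} Pass to $F'=H_1$. By flatness, $W\otimes\OK_{F'}$ is again saturated in $\OK_{F'}^n$ and isomorphic to $\mathfrak{a}\OK_{F'}$, which is principal. So $W\otimes\OK_{F'}=\OK_{F'}w$ for some $w$, and $Mw=a_1w$. The quotient $Q\otimes\OK_{F'}$ is projective of rank $n-1$, but it need not be free. This is harmless: since $\OK_{F'}w$ is a direct summand with free generator, it suffices to choose a basis adapted to the flag. I will avoid needing $Q$ free at this stage by working with the induced operator $\bar M$ on $\OK_{F'}^n/\OK_{F'}w\cong Q'$, which is projective of rank $n-1$ and $\bar M$-stable, with characteristic polynomial $\prod_{i\ge2}(t-a_i)$.

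\textbf{Induction on the projective quotient.} The induction hypothesis should be strengthened to a statement about an endomorphism of a projective $\OK_F$-module $P$ of rank $r$ with split characteristic polynomial: \emph{after base change to $H_r$, $P$ becomes free and has a basis in which the endomorphism is upper triangular}. Running this argument inside $P$, one extension is used per eigenvector splitting, and a final one makes the last rank one piece free. Since $H_{i}$ kills all classes from $H_{i-1}$, each saturated line becomes free one step up. The Steinitz class of the whole module also dies, so the complements become free modulo the pieces already split off. Lifting the triangular basis of the quotient to $\OK_{F'}^n$ (possible because $\OK_{F'}w$ is a direct summand) and prepending $w$ gives a basis of $\OK^n$ over the top field in which $M$ is upper triangular. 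The change-of-basis matrix then lies in $\GL_n$ over the top field.

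\textbf{Counting extensions.} Each step of peeling off an eigenvector makes one rank one summand free, and this uses one Hilbert class field. The last remaining rank one piece is free automatically once the total Steinitz class and all earlier pieces are trivial, because the Steinitz class of a direct sum is the product of the classes. Hence $n-1$ steps suffice, and we land in $H_{n-1}$.

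\textbf{Main obstacle.} The delicate point is this bookkeeping: we must ensure exactly $n-1$ ascents suffice rather than $n$. The proof has to use that the ambient module $\OK_K^n$ is free, so the Steinitz class of the final rank one quotient equals the inverse of the product of the others, which are already principal at that stage. I would also check that saturation and the kernel description are preserved under the base change $\OK_{H_{i-1}}\to\OK_{H_i}$, using flatness exactly as in Lemma \ref{t3}.
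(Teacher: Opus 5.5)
Your argument is correct and is essentially the paper's proof. Both split off one eigenline per step and make it free by ascending one Hilbert class field, stopping after $n-1$ steps; your saturated line $W\cong\mathfrak a$ is, up to isomorphism, the inverse of the content ideal $(v_1,\dots,v_n)$ that the paper makes principal in $\OK_{H_1}$.

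One correction removes your ``main obstacle'': $Q\otimes\OK_{F'}$ is in fact free, because $\OK_{F'}^n=\OK_{F'}w\oplus Q'$ forces its Steinitz class to be trivial. So you never need the projective-module version of the induction, and you can recurse directly on the $(n-1)\times(n-1)$ block over $\OK_{H_1}$, exactly as the paper does.
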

\begin{proof}
Let $\mathbf{v}$ be a non-zero column vector in $\OK_K^n$ such that $M\mathbf{v}=a_1\mathbf{v}$. We claim there is $U\in\GL_n(\OK_{H_1})$ such that $\mathbf{v}=U(s,0,\cdots,0)^T$ for some non-zero $s\in\OK_{H_1}$. So $U^{-1}MU(s,0\cdots,0)^T=a_1(s,0\cdots,0)^T$. Thus, the first column of $U^{-1}MU$ is $(a_1,0,\cdots,0)^T$, which means $M$ is similar to
\[\left(\begin{array}{cccc}a_1&*&\cdots&*\\0&*&\cdots&*\\\cdots&\cdots&\cdots&\cdots\\0&*&\cdots&*\end{array}\right)\]
in $\GL_n(\OK_{H_1})$. It then suffices to proceed by induction on the remaining $(n-1)\times(n-1)$ submatrix. Note that each inductive step requires passing to a larger Hilbert class field.

It only needs to prove the claim. We start with the case $n=2$. Suppose $\mathbf{v}=(v_1,v_2)^T$, where $v_1,v_2\neq0$. Consider the ideal $(v_1,v_2)\subseteq\OK_K$, we have $(s)=(v_1,v_2)\OK_{H_1}$ is a principal ideal of $\OK_{H_1}$, since $H_1$ is the Hilbert class field of $K$. Now, there exist $r_1,r_2\in\OK_{H_1}$ such that $v_1=r_1s$, $v_2=r_2s$ and $(r_1,r_2)=\OK_{H_1}$, so there is an invertible matrix $U=\left(\begin{array}{cc}r_1&*\\r_2&*\\\end{array}\right)$, such that $(v_1,v_2)^T=U(s,0)^T$. For the case $n>2$, it suffices to proceed by induction on the number of non-zero entries in the column vector $\mathbf{v}$.
\end{proof}

Fix a matrix $M\in\Mat_n(\OK_K)$ with characteristic polynomial $\chi(t)=\prod_{i=1}^n(t-a_i)$. By Lemma \ref{t11}, $M$ is similar to
\[\left(\begin{array}{cccc}a_1&*&\cdots&*\\0&a_2&\cdots&*\\\cdots&\cdots&\cdots&\cdots\\0&0&\cdots&a_n\end{array}\right)\]
in $\GL_n(\OK_{H_{n-1}})$. Let $g=(g_1,\cdots,g_n)\in(H_{n-1,\pp}^{\times})^{\oplus n}$, where $\pp$ is a finite place of $H_{n-1}$. Without loss of generality, one can assume $g(M)$ has form
\[\left(\begin{array}{cccc}g(a_1)&b_{12}&\cdots&b_{1n}\\0&g(a_2)&\cdots&b_{2n}\\\cdots&\cdots&\cdots&\cdots\\0&0&\cdots&g(a_n)\end{array}\right),\quad b_{kl}\in H_{n-1,\pp},\]
if we regard $g=g(t)$ as the image of $g=(g_1,\cdots,g_n)$ under the bijection
\[(H_{n-1,\pp}^{\times})^{\oplus n}\overset{\sim}{\to}(H_{n-1,\pp}[t]/\chi(t))^{\times},\]
by the Chinese remainder theorem. It is easy to verify that $g(a_i)=g_i$, $1\leq i\leq n$. Hence, $g(M)\in\GL_n(\OK_{H_{n-1,\pp}})$ if and only if $b_{kl}\in\OK_{H_{n-1,\pp}}$ and $g_i\in\OK_{H_{n-1,\pp}}^{\times}$. Determining a necessary and sufficient condition for $b_{kl}\in\OK_{H_{n-1,\pp}}$ is hard; for our purposes, it suffices to derive a necessary condition, this requires a more precise description of $g(t)$.

To express $g(t)\in(H_{n-1,\pp}[t]/\chi(t))^{\times}$ explicitly in terms of its values $g(a_i)=g_i\neq0$ at $a_i$, $1\leq i\leq n$, we apply the Lagrange interpolation formula:
\[g(t)=\sum_{i=1}^n\left(g_i\prod_{j=1,j\neq i}^n\frac{t-a_j}{a_i-a_j}\right)=\sum_{r=0}^{n-1}\left(\sum_{i=1}^nL_{ri}g_i\right)t^r,\]
where
\[L_{ri}:=\frac{(-1)^{n-r-1}}{\prod_{j=1,j\neq i}^n(a_i-a_j)}E_{n-r-1}\Big(\{a_1,\cdots,a_n\}\setminus\{a_i\}\Big)\in K\]
and $E_m$ is the $m$-th elementary symmetric polynomial of the elements in the given set.

\begin{proposition}\label{t12}
Let $M\in\Mat_n(\OK_K)$ with characteristic polynomial $\chi(t)=\prod_{i=1}^n(t-a_i)$, denote
\[\delta_{\chi}:=\prod_{1\leq i<j\leq n}(a_i-a_j),\quad a_i\in\OK_K.\]
Suppose $g=(g_1,\cdots,g_n)\in(H_{n-1,\pp}^{\times})^{\oplus n}$. If $g_i\in\OK_{H_{n-1,\pp}}^{\times}$ and $v_{\pp}(g_i-g_n)\geq v_{\pp}(\delta_{\chi})$ for all $1\leq i\leq n$, then $g(M)\in\GL_n(\OK_{H_{n-1,\pp}})$ and hence, $g\in\stab(\X_{M,N;K}(\OK_{H_{n-1,\pp}}))$.
\end{proposition}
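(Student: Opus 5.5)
The plan is to show directly that $g(M)$ has entries in $\OK_{H_{n-1,\pp}}$ and that its determinant is a unit. Lemma \ref{t10} (applied over $H_{n-1}$) then gives the stabilizer statement.

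\textbf{Reduction to a unit times a polynomial with small coefficients.} Since $g(M)=g_n I_n+(g-g_n)(M)$, where $g-g_n$ denotes the tuple $(g_1-g_n,\dots,g_{n-1}-g_n,0)$, it suffices to show that $h(M)\in\Mat_n(\OK_{H_{n-1,\pp}})$ for $h=(h_1,\dots,h_n)$ with $h_i:=g_i-g_n$ and $h_n=0$. Here we use that the Lagrange interpolation is linear in the values, and that the constant tuple $(g_n,\dots,g_n)$ corresponds to the constant polynomial $g_n$.

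\textbf{Integrality of the coefficients.} By the interpolation formula, $h(t)=\sum_{r=0}^{n-1}\bigl(\sum_i L_{ri}h_i\bigr)t^r$. For each $i$, the element $\prod_{j\neq i}(a_i-a_j)$ divides $\delta_\chi$ up to sign in $\OK_K$, because every factor $a_i-a_j$ with $j\neq i$ occurs once in $\delta_\chi$. Hence
\[v_{\pp}\Bigl(\prod_{j\neq i}(a_i-a_j)\Bigr)\le v_{\pp}(\delta_\chi)\le v_{\pp}(h_i).\]
The symmetric function $E_{n-r-1}(\{a_j\}_{j\neq i})$ lies in $\OK_K$. Therefore each $L_{ri}h_i\in\OK_{H_{n-1,\pp}}$, so $h(t)$ has integral coefficients. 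Since $M$ is integral, $h(M)$ is integral, and thus $g(M)$ is integral.

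\textbf{Invertibility.} By Lemma \ref{t11}, $M$ is conjugate over $\OK_{H_{n-1}}$ to an upper triangular matrix with diagonal $a_1,\dots,a_n$. Hence $\det g(M)=\prod_i g(a_i)=\prod_i g_i$, which is a unit by hypothesis. Since $g(M)$ is integral with unit determinant, we get $g(M)\in\GL_n(\OK_{H_{n-1,\pp}})$, and Lemma \ref{t10} gives $g\in\stab$.

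\textbf{Expected obstacle.} The only delicate step is the valuation comparison between $\prod_{j\ne i}(a_i-a_j)$ and $\delta_\chi$. The point is that $\delta_\chi$ contains each difference $a_i-a_j$ exactly once (up to sign), and all remaining factors of $\delta_\chi$ are integral, so their valuations are nonnegative.
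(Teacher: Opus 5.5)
Your proposal is correct and follows the paper's proof. Splitting off $g_n$ via linearity of interpolation is the same as the paper's relations on the $L_{ri}$ obtained from the constant tuple $(1,\dots,1)$, and in both arguments each denominator $\prod_{j\neq i}(a_i-a_j)$ is controlled by $\delta_{\chi}$. Your explicit divisibility $\prod_{j\neq i}(a_i-a_j)\mid\delta_{\chi}$ and the determinant check $\det g(M)=\prod_i g_i$ simply make explicit what the paper leaves implicit when it reads off integrality and the unit diagonal from the triangular form.
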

\begin{proof}
When choose $g=(1,\cdots,1)$, we obtain $g(t)\equiv1$ is a constant and $\{L_{ri}\}$ admits relations
\[L_{0n}=1-\sum_{i=1}^{n-1}L_{0i};\quad L_{rn}=-\sum_{i=1}^{n-1}L_{ri},~(1\leq r\leq n-1).\]
Now, for general $g=(g_1,\cdots,g_n)$, we have
\[\sum_{i=1}^nL_{0i}g_i=\sum_{i=1}^{n-1}L_{0i}g_i+L_{0n}g_n=\sum_{i=1}^{n-1}L_{0i}(g_i-g_n)+g_n\]
and
\[\sum_{i=1}^nL_{ri}g_i=\sum_{i=1}^{n-1}L_{ri}g_i+L_{rn}g_n=\sum_{i=1}^{n-1}L_{ri}(g_i-g_n),~(1\leq r\leq n-1).\]
Hence, the coefficients of $g(t)$ are integral in $H_{n-1,\pp}$ if each $v_{\pp}(g_i-g_n)\geq v_{\pp}(\delta_{\chi})$. This means each $b_{kl}$ is integral because it is a linear combination of elements in $\OK_{H_{n-1}}$ with coefficients in $\OK_{H_{n-1,\pp}}$.
\end{proof}

\subsubsection{Establishing the Upper Bound}\label{s4.2.2}
With Proposition \ref{t12} in hand, one can apply Corollary \ref{t9} to show, if the characteristic polynomial is fixed, then $C_K(M,N)$ admits a uniform upper bound which depends on $K$ and the fixed characteristic polynomial.
\begin{theorem}\label{t13}
Let $K$ be a number field, suppose $M\in\Mat_n(\OK_K)$ with characteristic polynomial $\chi(t)=\prod_{i=1}^n(t-a_i)$. If $N\in\Mat_n(\OK_K)$ is similar to $M$ over each $\OK_{K_{\p}}$, then
\[C_K(M,N)\leq[H_{n-1}:K]\cdot[\A_{H_{n-1}}^{\times}:H_{n-1}^{\times}\Xi_0],\]
where
\[\Xi_0:=\prod_{\pp|\delta_{\chi}}\left(1+\pp^{v_{\pp}(\delta_{\chi})}\right)\times\prod_{\pp\nmid\delta_{\chi}}\OK_{H_{n-1,\pp}}^{\times}.\]
\end{theorem}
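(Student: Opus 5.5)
Our plan is to pass first to $H:=H_{n-1}$, where Proposition \ref{t12} gives an explicit open subgroup of the stabilizer, and then to kill the remaining obstruction in Corollary \ref{t9} with one more abelian extension of $H$. Since $H_{n-1}/K$ is a tower of Hilbert class fields, $[H:K]$ is finite. By Lemma \ref{t3}, local similarity over every $\OK_{K_{\p}}$ implies local similarity over every $\OK_{H_{\pp}}$. Also $\chi$ still splits with the same roots $a_i\in\OK_K$, so the whole torsor setup of \S\ref{s4.1} applies over $H$ with $G=\GG_m^n$.

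Next we choose the open subgroup. Proposition \ref{t12} shows that $\Xi:=\{g=(g_1,\dots,g_n): g_i\in\Xi_0 \text{ and } g_i g_n^{-1}\in\Xi_0 \text{ componentwise}\}$ lies in $\stab_{\A}(\X_{M,N;H})$. The natural simpler choice, the diagonal-type subgroup with all $g_i\equiv g_n$, is not a product, so we need something else. Since the condition $v_{\pp}(g_i-g_n)\ge v_{\pp}(\delta_\chi)$ with $g_i$ units is implied by $g_i\in 1+\pp^{v_{\pp}(\delta_\chi)}$ for all $i$, we take $\Xi:=\Xi_0^{\oplus n}$ (with archimedean components unrestricted, as in $\Xi_0$). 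This is a product of open subgroups of the form required before Corollary \ref{t9}, with $\Xi_i=\Xi_0$ for every $i$. Let $F/H$ be the single abelian extension attached to $\Xi_0$, so that $\A_H^\times/H^\times\Xi_0\cong\Gal(F/H)$ via $\phi_{F/H}$. Then $[F:H]=[\A_H^\times:H^\times\Xi_0]$ and $[F:K]$ equals the claimed bound, so it suffices to show that $M,N$ are similar over $\OK_F$.

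Over $F$ we run the same criterion, and this is the main step. By Proposition \ref{t12} applied over $F$ (note $H_{n-1}(F)\supseteq F$, but Lemma \ref{t11} already gives triangularization over $\OK_F\supseteq\OK_H$), the group $\Xi_0^F:=\prod_{\mathfrak q\mid\delta_\chi}(1+\mathfrak q^{v_{\mathfrak q}(\delta_\chi)})\times\prod\OK_{F_{\mathfrak q}}^\times$, taken $n$ times, lies in the stabilizer over $F$. We then pick any adelic local point $x$ over $H$, base-changed to $F$, and use $f_{M,N;F}=f_{M,N;H}$ composed with the inclusion $\A_H^\times\hookrightarrow\A_F^\times$. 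By functoriality of the Artin map, the class of $\iota(f^i(x))$ in $\A_F^\times/F^\times\Xi^F_0$ corresponds to $\phi_{F'/F}(\iota(\cdot))=\phi_{F'/H}(N\text{-compatible})$, which is the restriction of $\phi_{F/H}(f^i(x))$. Hence it suffices to show that the image of $\A_H^\times$ in $\A_F^\times/F^\times\Xi^F_0$ is trivial, i.e. $\A_H^\times\subseteq F^\times\Xi_0^F$.

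We expect this last inclusion to be the main obstacle: the principal ideal theorem style statement that the ray class group of $H$ modulo $\delta_\chi$ capitulates in its ray class field. What we would try first is to avoid it. Instead we would choose $x$ cleverly over $H$: adjust $x$ by elements of $\stab$ and of $(H^\times)^{\oplus n}$ using $\phi_{F/H}$-surjectivity. The point is that each component $f^i(x)$ changes only by an element of $\A_H^\times$, whose norm from $\A_F^\times$ image generates. We would use that $N_{F/H}(\A_F^\times)\subseteq H^\times\Xi_0$ together with the projection formula as in Theorem \ref{t4}, showing that the obstruction class, which lives in $\A_H^\times/H^\times\Xi_0$, dies after base change to $F$. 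Concretely, we would argue via a Pic-type class killed in the extension of degree equal to the order of the group, mirroring Theorem \ref{t4}'s argument in reverse. If that fails, we would fall back on proving capitulation of the relevant ray classes directly via the transfer, as in the classical principal ideal theorem.
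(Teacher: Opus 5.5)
\textbf{Verdict: there is a genuine gap, at the capitulation step.} Your reduction is the paper's. You pass to $H=H_{n-1}$, use Proposition~\ref{t12} to put $\Xi_0^{\oplus n}$ inside $\stab_{\A}$, let $F$ be the class field of $H^\times\Xi_0$, and reduce everything to the capitulation $\A_H^\times\subseteq F^\times\Xi_0^F$. The paper settles that step in one line by citing a ``generalized principal ideal theorem''. Your proposal leaves it open, and none of your suggested routes closes it.
\begin{itemize}
\item Re-choosing $x$ cannot help. Two adelic integral points differ by local stabilizer elements, so the class of $f_{M,N;H}(x)$ modulo $(H^\times)^{\oplus n}\stab_{\A}$, which is essentially $[T_{M,N;H}]$, is independent of $x$.
\item The norm/projection-formula idea runs the wrong way. $N_{F/H}\circ\iota$ is multiplication by $[F:H]$, which kills $\A_H^\times/H^\times\Xi_0$ for trivial reasons and says nothing about $\iota$ itself. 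This is exactly why Theorem~\ref{t4} only gives a lower bound.
\item The transfer argument makes $\A_H^\times/H^\times\Xi_0\to\A_F^\times/F^\times\Xi_0^F$ trivial only when $F$ is the maximal abelian extension of $H$ inside the class field $E$ of $F^\times\Xi_0^F$. That holds for Hilbert class fields, and for instance when $\delta_\chi$ is squarefree in $\OK_H$ (the tame case). It fails once wild ramification enters.
\end{itemize}

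\textbf{Counterexample.} Take $K=H=\Q$ and $\chi=t(t-9)$. Then $[\A_\Q^\times:\Q^\times\Xi_0]=3$, $F=\Q(\zeta_9)^+$, and $9\OK_F=\mathfrak{Q}^6$. The extension $\Q(\zeta_{27})^+/F$ has conductor $\mathfrak{Q}^5$, so it lies in $E$, and capitulation genuinely fails. Let $M=\begin{pmatrix}0&2\\0&9\end{pmatrix}$ and $N=\begin{pmatrix}0&1\\0&9\end{pmatrix}$; these are similar over every $\Z_p$. Solving $UM=NU$ forces $U=\begin{pmatrix}x&y\\0&2x+9y\end{pmatrix}$. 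Hence $M,N$ are similar over $\OK_L$ if and only if $2$ is congruent modulo $9\OK_L$ to a unit of $L$.

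The units of $F$ are $\pm\alpha^a\alpha_2^b$, where $\alpha=\zeta_9+\zeta_9^{-1}$ is a root of $t^3-3t+1$ and $\alpha_2=\alpha^2-2$ (here $h_F=1$ and the cyclotomic units are everything). Modulo $9$, the elements of order dividing $3$ in the image of the unit group are exactly $(-\alpha)^{3a}(-\alpha_2)^{3b}\equiv 1-3(a\alpha+b\alpha_2)$. If $2$ were in the image, so would be $4=2^2=1+3$. That is impossible, because $\{1,\alpha,\alpha^2\}$ is a $\Z$-basis of $\OK_F$. So $M,N$ are \emph{not} similar over $\OK_F$.

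They are, however, similar over the ring of integers of $\Q(\zeta_7)^+$. There $w=\zeta_7+\zeta_7^{-1}$ gives the unit $4+9w$ of norm $1$, and $-(4+9w)^{-1}\equiv 2 \bmod 9$. So the bound holds in this example, but not via the field $F$ that your argument (and the paper's) produces.

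The missing ingredient is therefore a real one, and the paper's citation does not supply it in this generality. A proof needs either an extra hypothesis, such as squarefree $\delta_\chi$, or a different construction of an extension of degree $[\A_H^\times:H^\times\Xi_0]$.
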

\begin{proof}
If we assume $g_i\in1+\pp^{v_{\pp}(\delta_{\chi})}$ for all $1\leq i\leq n$, then $g\in\stab(\X_{M,N;H_{n-1}}(\OK_{H_{n-1,\pp}}))$, by Proposition \ref{t12}. Therefore, one can find an open subgroup $\Xi:=(\Xi_0)^{\oplus n}\subseteq\stab_{\A}(\X_{M,N;H_{n-1}})$. By the generalized principal ideal theorem for id\`{e}le class groups \cite[Chapter VI.7]{ne10}, there is a finite extension $F/H_{n-1}$ of degree $[\A_{H_{n-1}}^{\times}:H_{n-1}^{\times}\Xi_0]$ and a natural extension $\overline{\Xi}_0$ of $\Xi_0$ such that the map
\[\A_{H_{n-1}}^{\times}/H_{n-1}^{\times}\Xi_0\to\A_F^{\times}/F^{\times}\overline{\Xi}_0\]
is trivial. Note that for $L/K$ a finite extension and $\mathfrak{v}$ lies above $\p$, the matrices $M,N$ are similar over $\OK_{K_{\p}}$ implies they are similar over $\OK_{L_{\mathfrak{v}}}$, so
\[\emptyset\neq\prod_{\p\in\Omega_K}\X_{M,N;K}(\OK_{K_{\p}})\hookrightarrow\prod_{\mathfrak{v}\in\Omega_L}\X_{M,N;L}(\OK_{L_{\mathfrak{v}}}).\]
Hence, we obtain a commutative diagram
\[\xymatrix{\prod_{\pp}\X_{M,N;H_{n-1}}(\OK_{H_{n-1,\pp}})\ar^{f_{M,N;H_{n-1}}^i}[rrr]\ar@{^(->}[d]&&&\A_{H_{n-1}}^{\times}/H_{n-1}^{\times}\Xi_0\ar_{0}[d]\ar[rr]^{\phi_{F/H_{n-1}}}&&\Gal(F/H_{n-1})\ar[d]\\
\prod_{\mathfrak{Q}}\X_{M,N;F}(\OK_{F_{\mathfrak{Q}}})\ar^{f_{M,N;F}^i}[rrr]&&&\A_F^{\times}/F^{\times}\overline{\Xi}_0\ar[rr]^{\phi_{E/F}}&&\Gal(E/F)}\]
for some finite extension $E/F$. It is clear that $\phi_{E/F}\circ f_{M,N;F}^i\left(\prod_{\pp}\X_{M,N;H_{n-1}}(\OK_{H_{n-1,\pp}})\right)$ is trivial. By Corollary \ref{t9}, $\X_{M,N;F}$ has an $\OK_F$-point.
\end{proof}

In general, if $\chi(t)$ does not split over $K$, Theorem \ref{t13} remains applicable upon passing to the splitting field of $\chi(t)$. In this case, the established upper bound must be adjusted by a factor of $n!$, representing the maximum possible order of the Galois group and the maximal degree of the splitting extension. Furthermore, the construction involving the Hilbert class field must be transitioned accordingly to the splitting field.

This theorem gives an effective version of \cite[Theorem 7]{gu80} (which said, two matrices $M,N\in\Mat_n(\OK_K)$ are similar over all the local rings of $\OK_K$ if and only if $M,N$ are similar over some finite integral extension of $\OK_K$) in the non-derogatory semi-simple case, and partially generalizes \cite[Proposition 3.5]{hl25}:

\begin{corollary}\label{t14}
Let $\OK_K$ be a principal ideal domain and let $M\in\Mat_n(\OK_K)$ be a diagonal matrix with pairwise distinct diagonal entries. Then, $N\in\Mat_n(\OK_K)$ is similar to $M$ over each $\OK_{K_{\p}}$ if and only if $M,N$ are similar over $\OK_K$.
\end{corollary}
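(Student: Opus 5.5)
The ``if'' direction is immediate: a global similarity $U\in\GL_n(\OK_K)$ gives local similarities at every $\p$. For ``only if'', I would use a direct argument rather than the bound of Theorem \ref{t13}, since that bound passes to $H_{n-1}$ and a ray class field and so does not by itself yield $C_K(M,N)=1$. The plan is to compute $R_{M;K}$ explicitly and apply Lemma \ref{t2}.

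Write $M=\mathrm{diag}(a_1,\dots,a_n)$ with the $a_i\in\OK_K$ pairwise distinct. Then $M$ is non-derogatory, and any matrix commuting with $M$ is diagonal. Hence
\[R_{M;K}=\{A\in\Mat_n(\OK_K):AM=MA\}=\{\mathrm{diag}(x_1,\dots,x_n):x_i\in\OK_K\}\cong\OK_K^{\oplus n}.\]
Note that $R_{M;K}$ is strictly larger than $\OK_K[M]$ in general, and is already the maximal order. Since $N$ is locally similar to $M$ at every finite $\p$, Lemma \ref{t1} shows that $T_{M,N;K}$ is a locally free $R_{M;K}$-module of rank one. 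Its class therefore lies in
\[\Pic(R_{M;K})\cong\Pic(\OK_K)^{\oplus n},\]
because $\Pic$ of a finite product of rings is the product of the Picard groups.

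Since $\OK_K$ is a principal ideal domain, $\Pic(\OK_K)=0$, so $\Pic(R_{M;K})=0$ and $[T_{M,N;K}]$ is trivial. Lemma \ref{t2} then gives that $M$ and $N$ are similar over $\OK_K$.

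For completeness, the module can be described concretely. Decompose $T_{M,N;K}=\bigoplus_i T\,e_i$, where $e_i$ are the diagonal idempotents. Each summand $T\,e_i$ consists of matrices whose only nonzero column is the $i$-th, and that column lies in the eigenlattice $\{\mathbf v\in\OK_K^n:N\mathbf v=a_i\mathbf v\}$. This lattice is a rank one projective $\OK_K$-module, hence free over a PID, so $T_{M,N;K}$ is free over $R_{M;K}$.

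The only point needing care is that $R_{M;K}$ is the full diagonal ring and not merely $\OK_K[M]$; this is exactly what makes its Picard group vanish. It also explains the contrast with Theorem \ref{t5}, where $M_0'$ is not diagonalizable over $\Z$ and $R_{M_0';\Q}$ is a non-maximal order with nontrivial Picard group.
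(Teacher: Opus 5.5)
Your proof is correct, but it takes a different route from the paper's.

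The paper stays inside the ad\`elic framework of Section~\ref{s4}. Because $M$ is diagonal, $g(M)=\mathrm{diag}(g_1,\dots,g_n)$ (all $b_{kl}=0$). So by Lemma~\ref{t10} the local stabilizer is all of $(\OK_{K_{\p}}^{\times})^{\oplus n}$, with no congruence condition modulo $\delta_{\chi}$. Taking $\Xi=(\prod_{\p}\OK_{K_{\p}}^{\times})^{\oplus n}$, class number one gives $(K^{\times})^{\oplus n}\cdot\Xi=(\A_K^{\times})^{\oplus n}$, and Proposition~\ref{t8} then produces an $\OK_K$-point of $\X_{M,N;K}$.

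You instead work with the Section~\ref{s2} machinery. You observe that $R_{M;K}$ is the maximal order $\OK_K^{\oplus n}$, so $\Pic(R_{M;K})=\Pic(\OK_K)^{\oplus n}=0$, and Lemma~\ref{t2} finishes.

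The two arguments compute the same obstruction in two languages. The paper's stabilizer at $\p$ is exactly the unit group of $R_{M;K}\otimes\OK_{K_{\p}}$, and the resulting id\`ele class quotient is $\Pic(R_{M;K})$.

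Your version needs neither the torsor structure, nor the Wei--Xu criterion, nor class field theory. Your eigenlattice description even writes down the similarity explicitly: take the columns to be generators of the eigenlattices of $N$. It also makes clear that the real input is maximality of $R_{M;K}$. For non-principal $\OK_K$ the same argument places the obstruction in $\Pic(\OK_K)^{\oplus n}$, which the Hilbert class field kills by the principal ideal theorem.

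The paper's version, in exchange, presents the corollary as a degenerate case of the Theorem~\ref{t13} machinery. It shows exactly where the $\delta_{\chi}$-congruence, and with it the ray class field, drops out.

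Your remark that the bound of Theorem~\ref{t13} alone would not give $C_K(M,N)=1$ is right. The paper also has to recompute the stabilizer rather than simply quote that bound.
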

\begin{proof}
Indeed, $[H_{n-1}:K]=1$ because $\OK_K$ is principal. Furthermore, all $b_{kl}=0$ and hence are integral. So $g(M)\in\GL_n(\OK_{K_{\p}})$ if and only if $g_i\in\OK_{K_{\p}}^{\times}$, which does not depend on $\delta_{\chi}$. Thus, $C_K(M,N)=1$, since one can take $\Xi=\left(\prod_{\p}\OK_{K_{\p}}^{\times}\right)^{\oplus n}$ to be the trivial one and the class number of $K$ is equal to $1=\left[\A_K^{\times}:K^{\times}\prod_{\p}\OK_{K_{\p}}^{\times}\right]$.
\end{proof}

\begin{example}\label{t15}
Let $M_0'$ and $N_0'$ be the matrices over $\Z$ defined in the proof of Theorem \ref{t5}. Since $\Z$ is a principal ideal domain, by Theorem \ref{t13} we have
\[C_{\Q}(M_0',N_0')\leq\left[\A_{\Q}^{\times}:\Q^{\times}\left((1+q\Z_q)\times\prod_{p\neq q}\Z_p^{\times}\times\mathbb{R}^{\times}\right)\right]=\frac{[\Z_q^{\times}:1+q\Z_q]}{\#\{\pm1\}}=\frac{q-1}{2}.\]
The upper and lower bounds coincide in this case.
\end{example}


\begin{thebibliography}{00}

\bibitem[GD67]{gd67} A. Grothendieck, J. Dieudonn\'{e}. \textit{\'{E}l\'{e}ments de G\'{e}om\'{e}trie Alg\'{e}brique IV: \'{E}tude Locale des Sch\'{e}mas et des Morphismes de Sch\'{e}mas, Quatri\`{e}me Partie}, Publications Math\'{e}matiques de L'I.H.\'{E}.S., {\bf 32} (1967), 5-361.
\bibitem[Gu80]{gu80} R. M. Guralnick. \textit{A Note on the Local-Global Principle for Similarity of Matrices}, Linear Algebra and its Applications {\bf 30} (1980), 241-245.
\bibitem[HL25]{hl25} K. Huang, Y. Liu. \textit{Local-global Principle for Triangularizability and Diagonalizability of Matrices}, arXiv:2511.15827.
\bibitem[Neu10]{ne10} J. Neukirch. \textit{Algebraic Number Theory}, Springer-Verlag (2010).
\bibitem[New72]{ne72} M. Newman. \textit{Integral Matrices}, Academic Press (1972).
\bibitem[We13]{we13} C. A. Weibel. \textit{The K-Book: An Introduction to Algebraic K-Theory}, American Mathematical Society (2013).
\bibitem[WX12]{wx12} D. Wei, F. Xu. \textit{Integral Points for Multi-norm Tori}, Proceedings of the American Mathematical Society, (5) {\bf 104} (2012), 1019-1044.
\bibitem[WX13]{wx13} D. Wei, F. Xu. \textit{Integral Points for Groups of Multiplicative Type}, Advances in Mathematics, {\bf 232} (2013), 36-56.
\end{thebibliography}
\end{document}